\DeclarePairedDelimiterX{\norm}[1]{\lVert}{\rVert}{#1}
\newtheorem{theorem}{Theorem}[section]
\newtheorem{lemma}[theorem]{Lemma}
\newtheorem{definition}[theorem]{Definition}
\newtheorem{corollary}[theorem]{Corollary}
\theoremstyle{plain}
\newcommand*\rel@kern[1]{\kern#1\dimexpr\macc@kerna}
\newcommand*\widebar[1]{%
  \begingroup
  \def\mathaccent##1##2{%
    \rel@kern{0.8}%
    \overline{\rel@kern{-0.8}\macc@nucleus\rel@kern{0.2}}%
    \rel@kern{-0.2}%
  }%
  \macc@depth\@ne
  \let\math@bgroup\@empty \let\math@egroup\macc@set@skewchar
  \mathsurround\z@ \frozen@everymath{\mathgroup\macc@group\relax}%
  \macc@set@skewchar\relax
  \let\mathaccentV\macc@nested@a
  \macc@nested@a\relax111{#1}%
  \endgroup
}
\begin{document}
\title{On Joint Functional Calculus For Ritt Operators}
\keywords{Functional Calculus, Ritt Operators, Sectorial operators }

\thanks{The second author is supported by Council for Scientific and Industrial Research, MHRD, Government of India.}
\author{parasar Mohanty}
\author{Samya Kumar Ray}
\address{Parasar Mohanty: Department of Mathematics and Statistics, Indian Institute of Technology, Kanpur-208016}
\email{parasar@math.iitk.ac.in}

\address{Samya Kumar Ray: Department of Mathematics and Statistics, Indian Institute of Technology, Kanpur-208016}
\email{samya@math.iitk.ac.in}

\pagestyle{headings}

\begin{abstract}
In this paper, we study joint functional calculus for commuting $n$-tuple of Ritt operators. We provide an equivalent characterisation of boundedness for joint functional calculus for Ritt operators on $L^p$-spaces, $1< p<\infty,$ where each of them admits a bounded functional calculus. We also investigate joint similarity problem for commuting $n$-tuple of Ritt operators. We get our results by proving a suitable multivariable transfer principle between sectorial and Ritt operators as well as an appropriate joint dilation result in a  general setting.
\end{abstract}
\maketitle
\section{Introduction} 
Let $X$ be a Banach space and $T$ be a linear operator on $X.$ In many instances it is important to assign a meaning to the symbol $f(T)$ for a bounded holomorphic function $f$ defined on an appropriate domain.  From celebrated von Neumann inequality \cite{vN},  we can associate bounded functional calculus  for every contraction when $X$ is a Hilbert space. More precisely, if  $f$ is any holomorphic function with poles outside closure of the unit disc $\overline{\mathbb D}$  and $T$ a contraction,  then 
\begin{equation*}
\|f(T)\|\leq\|f\|_{\infty,\mathbb{D}}. 
\label{VN}
\end{equation*}

Above inequality follows for a   commuting pair of contractions $(T_1,T_2)$ on a Hilbert space from a theorem by  Ando \cite{AT}. However,  Varopoulos  \cite{VA} provided an example of the failure of this inequality for commuting $3$-tuple of contractions. 

McIntosh and his coauthors \cite{CO}, \cite{MC} developed a notion of $H^\infty$-functional calculus for sectorial operators on Banach spaces. Sectorial operators have been studied extensively. It has many applications in partial differential  equations and harmonic analysis \cite{WE1}. The notion of joint functional calculus for sectorial operators with commuting resolvents is due to \cite{AL}, which again has important applications in studying maximal regularity problems \cite{KL}, \cite{LLL}.

In \cite{LeM2} Christian Le Merdy  initiated the study of functional calculus for Ritt operators. He and  his coauthors in \cite{AFL}, \cite{ALM} and \cite{LeM2},  have obtained various results in the lines of sectorial operators and also  characterised  Ritt operators on $L^p$-spaces having bounded $H^\infty$-functional calculus. 

In this paper, we develop a notion of joint functional calculus for commuting Ritt operators and obtain equivalent criterion for commuting tuple of Ritt operators to admit joint bounded functional calculus on $L^p$-space, $1<p<\infty,$ where each of them also satisfies a bounded functional calculus.
We apply our methods to study joint similarity problem for commuting tuple of Ritt operators acting on a Hilbert space.

\section{Notations and main results}\label{Bi}
Let $(\Omega,\mathbb{F},\mu)$ be a $\sigma$-finite measure space and $1\leq p\leq\infty$. We denote $L^p(\Omega,X)$ to be the usual Bochner space. Given any open set $D\subseteq\mathbb{C}^n,$ we define $H^\infty(D)$ to be the set of all bounded holomorphic maps equipped with the supremum norm. For any set $K$ and $f:K\to \mathbb{C}$,  denote $\|f\|_{\infty,K}:=\sup\{|f(z)|:z\in K\}.$ If $a\in\mathbb{C},$ we denote the open disc of radius $r$ with center $a$ to be $D(a,r).$ For any set $S$ we denote $S^n$ by its $n$-times cartesian product and  $d(S_1,S_2)$ be the usual distance between $S_1,S_2\subseteq\mathbb{C}$, i.e. $d(S_1,S_2):=\inf\{|z_1-z_2|:z_1\in S_1,z_2\in S_2\}.$
\subsection{Sectorial operators:} We recall some preliminaries on sectorial operators and associated joint $H^\infty$-functional calculus. We recommend the interested readers \cite{AL}, \cite{DV}, \cite{FM}, \cite{KK}, \cite{KL} and \cite{LLL} for more on this direction.
\begin{definition}[Sectorial operator]\label{SEC}
For any $\omega\in(0,\pi),$ let $\Sigma_{\omega}:=\{z\in\mathbb{C}\setminus\{0\}:|arg\;z|<\omega\}$ be the open sector of an angle $2\omega$ around the positive real axis $(0,\infty)$ (see FIGURE. \ref{fig:S}).
We say that a densely defined closed operator $A:D(A)\subseteq X\rightarrow X$ with domain $D(A)$ is sectorial of type $\omega\in(0,\pi)$ if  we have
\begin{itemize}
\item[(i)] $\sigma(A)\subseteq\overline{\Sigma_\omega}.$ 
\item[(ii)]For any $\nu\in(\omega,\pi),$ the set $\{zR(z,A): z\in\mathbb{C}\setminus\overline{\Sigma_\nu}\}$ is bounded, where $R(.,A)$ is the resolvent operator of $A.$
\end{itemize}

\end{definition}

Let $\nu\in(0,\pi)$ and $\Gamma_{\nu}$ be the boundary of $\Sigma_{\nu},$ oriented counter-clockwise (see FIGURE. \ref{fig:S}).
For $\theta_i\in(0,\pi),$ $1\leq i\leq n,$ denote $H_0^\infty\big(\prod_{i=1}^n\Sigma_{\theta_i}\big)$ to be the set of all bounded holomorphic 
functions $f:\prod_{i=1}^n\Sigma_{\theta_i}\to\mathbb{C}$ with the property that there exists constants $C,s>0,$ depending only on $f,$ such that
\begin{equation*}
|f(z_1,\dots,z_n)|\leq C\prod_{i=1}^n\frac{|z_i|^s}{1+|z_i|^{2s}}\ \ ,\ \text{for all}\ (z_1,\dots,z_n)\in\prod_{i=1}^n\Sigma_{\theta_i}.
\end{equation*}
Let $\textbf{A}:=(A_1,\dots,A_n)$ be an $n$-tuple of operators with mutually commuting resolvents such that each $A_i$ is sectorial of type $\omega_i\in(0,\pi),\ 1\leq i\leq n.$ Let $\omega_i<\theta_i<\pi$ and $\nu_i\in (\omega_i,\theta_i),$
$1\leq i\leq n.$ One defines 
\begin{equation}\label{JO}
f(\textbf{A}):=\Big(\frac{1}{2\pi i}\Big)^n\int_{\prod_{i=1}^n\Gamma_{\nu_i}}f(z_1,\dots,z_n)\prod_{i=1}^nR(z_i,A_i)dz_i.
\end{equation}
The decay on the resolvent operators ensures that the integral in \eqref{JO} is absolutely convergent and by Cauchy's theorem, it is independent of $\nu_i,$ $1\leq i\leq n.$
Let $\Phi_{\textbf{A}}:H_0^\infty(\prod_{i=1}^n\Sigma_{\theta_i})\to B(X)$ be defined as \[\Phi_{\textbf{A}}(f):=f(\textbf{A}).\] Then, the map $\Phi_{\textbf{A}}$ is an algebra homomorphism \cite{DV}.

\begin{figure}
\begin{center}
\begin{tikzpicture}
\draw[gray, thick] (0,3)--(0,-3);
\draw[gray, thick] (0,-3)--(0,3);
\draw[gray, thick] (2,-2) -- (0,0);
\draw[gray, thick] (-3,0) -- (3,0);
\draw[gray, thick] (0,0) -- (2,2);
\draw[gray, thick] (2,2) -- (0,0);
\draw[gray,thick] (0,0)--(45:1.5cm);
\draw[gray, thick,<->] (1,0) arc  (0:45:1);  
\node at (1.25,.75){$\omega$};
\draw[gray,thick,-<] (0,0)--(1,1);
\draw[gray,thick,->] (0,0)--(1,-1);
\filldraw[black] (0,0) circle (2pt) node[anchor=west]{};
\end{tikzpicture}
\end{center}
\caption{} 
\label{fig:S}
\end{figure}

\begin{definition}
We say that $\textbf{A}$ admits a joint bounded
$H^\infty\big(\prod_{i=1}^n\Sigma_{\theta_i}\big)$ functional calculus, if for all $f\in H_0^\infty\big(\prod_{i=1}^n\Sigma_{\theta_i}\big),$ there exists a constant $C>0$ (independent of $f$), such that
\begin{equation*}
\|f(\textbf{A})\|_{X\to X}\leq C\|f\|_{\infty,\prod_{i=1}^n\Sigma_{\theta_i}}.
\end{equation*} 
\end{definition}

\subsection{Ritt operators:}
We recommend \cite{LL}, \cite{LeM2} and \cite{RA2} references therein  for details about Ritt operators. 
\begin{definition}[Ritt operator]\label{RIT}
For any $\gamma\in(0,\frac{\pi}{2})$, let $\mathcal{B}_\gamma$ (Stolz domain of angle 
$\gamma$) be the interior of the convex hull of $1$ and the disc $D(0,\sin\gamma)$ (FIGURE \ref{fig:R}). An operator $T:X\to X$ is said to be a Ritt  operator of type $\alpha\in(0,\frac{\pi}{2})$ if

\item[(i)] $\sigma(T)\subseteq\overline{\mathcal{B}_\alpha}.$ (see FIGURE \ref{fig:R})\label{ll}
\item[(ii)]For any $\beta\in(\alpha,\frac{\pi}{2}),$ the set
$\{(1-\lambda)R(\lambda,T):\lambda\in\mathbb{C}\setminus\overline{\mathcal{B}_\beta}\}$ is bounded.
\end{definition}

\begin{figure}\label{FIG2}
\begin{center}
\begin{tikzpicture}
\draw[black, thick] (0,0) -- (-1.435,1);
\draw[black,thick] (0,0)-- (-1.435,-1);
\draw [dashed] (-1.435,1) -- (-3.731,2.5);
\draw [dashed]  (-1.435,-1) -- (-3.731,-2.5);
\draw [black,thick] (0,0)--(-2.3,0);

\draw [gray!10,fill= gray!10] (0,0) -- (-1.435,1)  arc  (120:150:3.059225) -- cycle;
\draw [gray!10,fill= gray!10] (0,0) -- (-1.435,-1)  arc  (210:180:3.059225) -- cycle;
\draw[gray, thick] (0,0) arc  (0:360:2.3);
\draw[gray!10, thick,fill=gray!10] (-1,0) arc  (0:360:1.2);
\draw [black,thick] (0,0)--(-2.3,0);
\node at (190:2.40cm) {0};
\node at (165:1.9cm) {$\mathcal{B}_\gamma$};
\draw [black] (0,0) -- (-.3587,.25)  arc  (120:165:.4) -- cycle;
\node at (165:.7cm) {$\gamma$};
\node at (0.3,0) {1};
\end{tikzpicture}
\end{center}
\caption{} 
\label{fig:R}
\end{figure}
Let $\gamma_i\in(0,\frac{\pi}{2}),\ 1\leq i\leq n.$ Let $H_0^\infty\big(\prod_{i=1}^n\mathcal{B}_{\gamma_i}\big)$ be the set of all bounded holomorphic functions $\phi:\prod_{i=1}^n\mathcal{B}_{\gamma_i}\to
\mathbb{C},$ such that 
\begin{equation*}
|\phi(\lambda_1,\dots,\lambda_n)|\leq c\prod_{i=1}^n|1-\lambda_i|^s,\ \text{for}\ (\lambda_1,\dots,\lambda_n)\in\prod_{i=1}^n\mathcal{B}_{\gamma_i}
\end{equation*}
for some constants $c,s>0,$ depending only on $\phi.$ 
Let $\Gamma_{\mathcal{B}_{\beta}}$ denote the boundary of $\mathcal{B}_\beta$ oriented counterclockwise.
Suppose $\textbf{T}=(T_1,\dots,T_n)$ is a commuting tuple of Ritt operators such that each $T_i$ is a Ritt operator of type $\alpha_i\in(0,\frac{\pi}{2})$ for $1\leq i\leq n.$ Suppose $\beta_i\in(\alpha_i,\gamma_i),$ 
$1\leq i\leq n.$ We define
\begin{equation}\label{bal}
\phi(\textbf{T}):=\Big(\frac{1}{2\pi i}\Big)^n\int_{\prod_{i=1}^n\Gamma_{\mathcal{B}_{\beta_i}}}\phi(\lambda_1,\dots,\lambda_n)\prod_{i=1}^nR(\lambda_i,A_i)d\lambda_i.
\end{equation}
The above integral is absolutely convergent due to adequate decay on the resolvent operators. By Cauchy's theorem it is independent of $\beta_i,$ $1\leq i\leq n.$

\begin{definition}
We say that $\textbf{T}$ admits a joint bounded 
$H^\infty\big(\prod_{i=1}^n\mathcal{B}_{\gamma_i}\big)$ functional calculus if 
\begin{equation}
\|\phi(\textbf{T})\|_{X\to X}\leq C\|\phi\|_{\infty,\prod_{i=1}^n\mathcal{B}_{\gamma_i}},
\label{FC}
\end{equation} 
for some positive constant $C>0,$ which is independent of $\phi$ and $\phi\in H_0^\infty(\prod_{i=1}^n\mathcal{B}_{\gamma_i}).$
\end{definition}

%

In the spirit of \cite{AFL} and \cite{ALM} we investigate necessary and sufficient conditions for commuting tuple of Ritt operators to admit a joint bounded functional calculus. We prove the following useful transfer result.
\begin{theorem}[Transfer principle]\label{trans}
 Suppose $\mathbf{T}=(T_1,\dots,T_n)$ is a commuting tuple of Ritt operators on $X.$ Let us denote the sectorial operators $A_i=I_X-T_i$ for $1\leq i\leq n.$ Then, the following are 
equivalent.
\begin{itemize}
\item[1.]There exist $\gamma_i\in(0,\frac{\pi}{2}),$ $1\leq i\leq n,$ such that for all $1\leq k\leq n,$ and $1\leq i_1<\dots<i_k\leq n,$ the tuples $(T_{i_1},\dots,T_{i_k})$ admit a joint bounded $H^\infty(
{\mathcal{B}}_{
{\gamma_{i_1}}}\times\cdots\times{\mathcal{B}}_{{\gamma_{i_k}}})$ functional calculus
\item[2.]There exist $\theta_i\in(0,\frac{\pi}{2}),$ $1\leq i\leq n,$ such that for all $1\leq k\leq n,$ and $1\leq i_1<\dots<i_k\leq n,$ the tuples $(A_{i_1},\dots,A_{i_k})$ admit a joint bounded $H^\infty(\Sigma_{\theta_{i_1}}\times\dots\times\Sigma_{\theta_{i_k}})$ functional calculus.
\end{itemize}
\end{theorem}
The above mentioned result is a mulitivariable generalisation of a similar transfer principle, proved in \cite{LeM2} and can be exploited to transfer known results about sectorial operators to Ritt operators.

The existence of bounded functional calculus is deeply connected to the notion of dilation. For instance, if $T$ is a contraction on a Hilbert space, then the classical result of Sz-Nagy and Foias (see \cite{SF}, Chapter 1) says that $T$ has a unitary dilation. Ando \cite{AT} showed that any commuting couple of contractions on a Hilbert space admits a joint unitary dilation on a common Hilbert space. Nagy-Foias and Ando's dilation theorem can be used to obtain von Neumann inequality in one and two variables respectively. However, there are examples \cite{GRA}, \cite{VA} of three commuting contractions on a Hilbert space which fail to have a joint unitary dilation. Moving out of the realm of Hilbert spaces, Akocglu and Sucheston \cite{AS} proved that any positive contraction on an $L^p$-space has a onto isometric dilation, $1<p\neq 2<\infty,$ which in turn shows that this class of contractions satisfy the Matsaev's conjecture. We refer \cite{CRW}, \cite{CRRW} and \cite{PEL} and references therein for more information in this direction and \cite{RA1} for multivariate generalisations. In \cite{AFL} and \cite{ALM}, the authors provided a characterisation of bounded functional calculus for Ritt operators in terms of loose dilation, which we define below.
\begin{definition} Let $1<p\neq 2<\infty.$ Suppose $\mathbf{T}=(T_1,\dots,T_n)$ be a commuting tuple of bounded operators on $L^p(\Omega)$. We say that the $n$-tuple $\mathbf{T}$ admits a joint isometric loose
dilation, if there exists a measure space $\Omega^\prime,$ a commuting tuple of onto isometries $\mathbf{U}=(U_1,\dots,U_n),$ on $L^p(\Omega^\prime),$ 
together with two bounded operators $\mathcal{Q}
:L^p(\Omega^\prime)\to L^p(\Omega)$ and $\mathcal{J}:L^p(\Omega)\to L^p(\Omega^\prime),$ such that $$T_1^{i_1}\cdots T_n^{i_n}=\mathcal{Q}U_1^{i_1}\cdots U_n^{i_n}\mathcal{J}$$ for all $i_1,\dots,i_n\in\mathbb{N}_0.$ 
\end{definition}
In the case of Hilbert space and for any commuting tuple  $\mathbf{T}=(T_1,\dots,T_n)$  of bounded operators on $\mathcal{H},$ we say that the $n$-tuple $\mathbf{T}$ admits a joint isometric loose
dilation, if there exists a Hilbert space space $\mathcal{K},$ a commuting tuple of onto isometries $\mathbf{U}=(U_1,\dots,U_n),$ on $\mathcal{K},$ 
together with two bounded operators $\mathcal{Q}
:\mathcal{K}\to \mathcal{H}$ and $\mathcal{J}:\mathcal{H}\to \mathcal{K},$ such that \[T_1^{i_1}\cdots T_n^{i_n}=\mathcal{Q}U_1^{i_1}\cdots U_n^{i_n}\mathcal{J}\] for all $i_1,\dots,i_n\in\mathbb{N}_0.$

 We prove the following theorem, which can be realised as a multivariate generalisation of Theorem (4.1) of \cite{AFL}.

\begin{theorem}\label{DIAL}
Let $1<p<\infty.$ Let $X$ be a reflexive Banach space such that both $X$ and $X^*$ have finite cotype. Let $\mathbf{T}=(T_1,\dots,T_n)$ be a commuting tuple of bounded operators on $X$ such that each $T_i$ is Ritt operator and admits a bounded $H^\infty$-functional calculus. Then, there exists a measure space $\Omega,$ a commuting tuple of isometric isomorphisms $\mathbf{U}=(U_1,\dots,U_n)$ 
on $L^p(\Omega,X),$ together with two bounded operators $\mathcal{Q}:L^p(\Omega,X)\to X$ and $\mathcal{J}:X\to L^p(\Omega,X),$ such that 
\begin{equation*}\label{DIA}
T_1^{i_1}\cdots T_n^{i_n}=\mathcal{Q}U_1^{i_1}\cdots U_n^{i_n}
\mathcal{J}, \text{for all}\ \ i_1,\dots,i_n\in\mathbb{N}_0.
\end{equation*}

In addition, we have
\begin{itemize}
\item[1.]If $X$ is an ordered Banach space, the maps $U_j$'s can be chosen as positive operators, $1\leq j\leq n.$
\item[2.] If $X$ is a closed subspace of an $L^p$-space, then $L^p(\Omega,X)$ is again a closed subspace of an $L^p$-space and the maps $U_j$'s can be chosen to be restrictions of a commuting
tuple of positive isometric isomorphisms on an $L^p$-space, $1\leq j\leq n.$
\item[3.]If $X$ is an $SQ_p$ space, then the Banach space $L^p(\Omega,X)$ is again an $SQ_p$ space and the maps $U_j$'s can be chosen to be compressionss of a commuting tuple of positive isometric 
isomorphisms on an $L^p$-space, $1\leq j\leq n.$
\end{itemize}
\end{theorem}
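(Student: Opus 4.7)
The plan is to proceed by induction on $n$. The base case $n=1$ is precisely Theorem~4.1 of \cite{AFL}, which under the stated reflexivity and cotype hypotheses produces for a single Ritt operator with bounded $H^\infty$-functional calculus a measure space $\Omega$, an isometric isomorphism $U$ on $L^p(\Omega,X)$, and intertwining maps $\mathcal{J}\colon X \to L^p(\Omega,X)$ and $\mathcal{Q}\colon L^p(\Omega,X) \to X$. The key observation that makes the induction go through is the \emph{naturality} of this construction: $U$ and $\mathcal{J}$ are built from the $H^\infty$-calculus of $T$, so any bounded operator $S$ on $X$ commuting with $T$ lifts to $I_{L^p(\Omega)}\otimes S$ on $L^p(\Omega,X)$, which commutes with $U$ and satisfies $(I\otimes S)\mathcal{J}=\mathcal{J}S$ and $\mathcal{Q}(I\otimes S)=S\mathcal{Q}$.

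For the inductive step from $n-1$ to $n$, I would first apply the inductive hypothesis to $(T_1,\ldots,T_{n-1})$ on $X$, producing a measure space $\Omega'$, a commuting tuple of isometric isomorphisms $V_1,\ldots,V_{n-1}$ on $Y:=L^p(\Omega',X)$, and intertwiners $\mathcal{J}'\colon X\to Y$, $\mathcal{Q}'\colon Y\to X$. The space $Y$ is reflexive with $Y$ and $Y^*$ of finite cotype (since $X$ is and $1<p<\infty$). Next, lift $T_n$ to $\widetilde{T}_n:=I_{L^p(\Omega')}\otimes T_n$ on $Y$; this is again a Ritt operator with bounded $H^\infty$-functional calculus of the same type. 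Apply the base case to $\widetilde{T}_n$ on $Y$ to obtain a measure space $\Omega''$, an isometric isomorphism $V_n$ on $L^p(\Omega'',Y)\cong L^p(\Omega''\times\Omega',X)$, and maps $\widetilde{\mathcal{J}}\colon Y\to L^p(\Omega'',Y)$, $\widetilde{\mathcal{Q}}\colon L^p(\Omega'',Y)\to Y$. Set $\Omega:=\Omega''\times\Omega'$, $U_j:=I_{L^p(\Omega'')}\otimes V_j$ for $1\le j\le n-1$, $U_n:=V_n$, $\mathcal{J}:=\widetilde{\mathcal{J}}\circ\mathcal{J}'$, and $\mathcal{Q}:=\mathcal{Q}'\circ\widetilde{\mathcal{Q}}$. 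The joint dilation identity \eqref{DIA} should then drop out by composing the two individual dilation identities and invoking the naturality observations above.

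The main obstacle will be establishing the commutation $U_nU_j=U_jU_n$ for $j\le n-1$. By the naturality of the base-case construction, $V_n$ commutes with every bounded operator on $Y$ commuting with $\widetilde{T}_n$; and because $T_n$ commutes with each $T_i$ ($i<n$), the lift $\widetilde{T}_n=I\otimes T_n$ commutes with any operator built from the functional calculi of $T_1,\ldots,T_{n-1}$, in particular with each $V_j$ produced by the inductive construction. Tensoring with $I_{L^p(\Omega'')}$ preserves this, yielding $V_n(I\otimes V_j)=(I\otimes V_j)V_n$ as required. The three additional statements are verified by tracking positivity, the subspace-of-$L^p$ property, and the $SQ_p$ property through the construction: each property is preserved by the Bochner spaces $L^p(\Omega'',\cdot)$ and by the base-case construction of \cite{AFL}, so it survives the induction.
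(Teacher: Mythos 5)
Your overall strategy — induction on $n$ starting from Theorem~4.1 of \cite{AFL}, stitched together by an intertwining/naturality property of the one-variable dilation — is the same as the paper's, but you reverse the order of the induction, and this reversal creates a genuine gap. The paper handles $T_1$ first by the one-variable construction (on $X\oplus_p L^p(\Omega_0,X)$) and only \emph{then} invokes the inductive hypothesis on $(T_2,\dots,T_m)$ acting on the original space $X$. With that ordering, the only intertwining needed is the explicitly verified identity $J_1S=(S\oplus(I_{L^p(\Omega_0)}\otimes S))J_1$ for the single dilation of $T_1$ (their identity \eqref{ID}), applied to $S=T_2^{i_2}\cdots T_m^{i_m}$; the remaining manipulations ($U^{i_1}$ sliding past $Q_2\oplus(I\otimes Q_2)$, and commutativity of the final $U_j$'s) are trivial tensor-factor commutations over disjoint coordinates and require \emph{no} naturality of the inductive dilation.

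Your ordering — inductive hypothesis on $(T_1,\dots,T_{n-1})$ first, then the base case for the lift $\widetilde{T}_n$ on $Y=L^p(\Omega',X)$ — forces you to know that the $(n-1)$-variable dilation is itself natural: you need $\mathcal{J}'T_n=\widetilde{T}_n\mathcal{J}'$ and $V_j\widetilde{T}_n=\widetilde{T}_nV_j$ for $j\le n-1$. You justify the latter with the claim that ``$\widetilde{T}_n$ commutes with any operator built from the functional calculi of $T_1,\dots,T_{n-1}$, in particular with each $V_j$.'' That heuristic is not correct: the $V_j$'s are not elements of the $H^\infty$-calculus of the $T_i$'s. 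They are assembled from a Rademacher shift on $L^p(\Omega_0)$, a mean-ergodic decomposition of $X$, and square functions; there is no reason \emph{a priori} that an operator commuting with all $T_i$ commutes with them unless that fact is explicitly built into the construction. What you actually need is to strengthen the inductive hypothesis to include the naturality statement (any $S\in B(X)$ commuting with $T_1,\dots,T_{n-1}$ lifts to $I\otimes S$ commuting with each $V_j$ and intertwining $\mathcal{J}'$ and $\mathcal{Q}'$), and then to verify in the inductive step that the $n$-variable dilation inherits this property. As written, your proof assumes this without stating or proving it. This is fixable — the strengthened naturality does propagate through the tensoring — but the gap should be closed, or, more economically, you can adopt the paper's ordering and avoid it entirely.

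A smaller imprecision: when you appeal to the naturality of the base case for $V_n$, the statement should be that any $S\in B(Y)$ commuting with $\widetilde{T}_n$ lifts to $I_{L^p(\Omega'')}\otimes S$ on $L^p(\Omega'',Y)$ commuting with $V_n$, and you apply this to $S=V_j$; the phrase ``$V_n$ commutes with every bounded operator on $Y$'' does not typecheck, since $V_n$ acts on $L^p(\Omega'',Y)$. The treatment of items~(1)--(3) by bookkeeping the relevant property through the Bochner-space and base-case constructions matches the paper's (brief) treatment.
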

As far as we know there is no Ando like result for commuting positive operators on $L^p$-spaces.  However, the following result states that  if $(T_1,T_2)$ are commuting tuple of positive contractions and one of them is Ritt and admits a bounded $H^\infty$-functional calculus, the tuple admits a joint isometric loose dilation. In general for $n$-tuple we have the following result whose proof will be similar to the proof of Theorem \eqref{DIAL}.
 \begin{theorem}
Let $1<p<\infty.$ Let $\mathbf{T}=(T_1,\dots,T_n)$ be a commuting tuple of bounded operators on $L^p(\Omega)$ such that 
\begin{enumerate}
\item[1.] $T_1$ admits a loose dilation,
\item[2.] Each $T_i$ is Ritt operator and $T_i$ admits a bounded $H^\infty$-functional calculus for $2\leq i\leq n.$
\end{enumerate}
 Then $\mathbf{T}$ admits a joint isometric loose dilation.
 \end{theorem}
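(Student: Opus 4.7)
The plan is to mimic the proof of Theorem \ref{DIAL}, substituting the loose dilation of $T_1$ provided by hypothesis~(1) for the one that Theorem \ref{DIAL} would construct from a bounded $H^\infty$-functional calculus.

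First, I would apply Theorem \ref{DIAL} (item 2) to the commuting tuple $(T_2,\dots,T_n)$ of Ritt operators on $L^p(\Omega)$, each admitting a bounded $H^\infty$-functional calculus. This yields a measure space $\Omega'$, a commuting tuple $(V_2,\dots,V_n)$ of positive isometric isomorphisms on $L^p(\Omega'\times\Omega)\simeq L^p(\Omega',L^p(\Omega))$, together with bounded maps $\mathcal{Q}_0\colon L^p(\Omega'\times\Omega)\to L^p(\Omega)$ and $\mathcal{J}_0\colon L^p(\Omega)\to L^p(\Omega'\times\Omega)$ such that
\[
T_2^{i_2}\cdots T_n^{i_n} \;=\; \mathcal{Q}_0\, V_2^{i_2}\cdots V_n^{i_n}\, \mathcal{J}_0.
\]
Since each $V_j$ is produced from $T_j$ via a functional-calculus construction and every $T_j$ commutes with $T_1$, the pointwise lift $\widetilde{T}_1 := I_{L^p(\Omega')}\otimes T_1$ on $L^p(\Omega'\times\Omega)$ commutes with every $V_j$ and intertwines appropriately with $\mathcal{Q}_0$ and $\mathcal{J}_0$. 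Then, invoking hypothesis~(1), let $(U_1, L^p(\Omega_1), \mathcal{Q}_1, \mathcal{J}_1)$ be the given loose dilation of $T_1$; tensoring with $I_{L^p(\Omega')}$ produces an onto isometry $\widetilde{U}_1 := I\otimes U_1$ on $L^p(\Omega'\times\Omega_1)$ that is a loose dilation of $\widetilde{T}_1$ via the maps $I\otimes\mathcal{Q}_1$ and $I\otimes\mathcal{J}_1$. I would next transfer each $V_j$ to a positive isometric isomorphism $\widehat{V}_j$ on $L^p(\Omega'\times\Omega_1)$ that commutes with $\widetilde{U}_1$, and chain the two dilations through $\mathcal{Q}:=\mathcal{Q}_0\,(I\otimes\mathcal{Q}_1)$ and $\mathcal{J}:=(I\otimes\mathcal{J}_1)\,\mathcal{J}_0$ to arrive at
\[
T_1^{i_1}T_2^{i_2}\cdots T_n^{i_n} \;=\; \mathcal{Q}\, \widetilde{U}_1^{i_1}\widehat{V}_2^{i_2}\cdots\widehat{V}_n^{i_n}\, \mathcal{J}.
\]

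The main obstacle is transferring the $V_j$'s to the larger space $L^p(\Omega'\times\Omega_1)$ as commuting positive isometric isomorphisms that remain compatible with $\widetilde{U}_1$ and with the lifted dilation maps $I\otimes\mathcal{Q}_1, I\otimes\mathcal{J}_1$. A cleaner way to bypass this difficulty is to carry out the construction inductively on $n$: take the given loose dilation of $T_1$ as the base case, and at each step dilate the next Ritt operator $T_k$ ($k\geq 2$) by invoking the single-variable case of Theorem \ref{DIAL} after lifting the previously constructed joint dilation to the ambient space of the new one by tensoring with the identity. Commutativity of the successive dilations is inherited from commutativity of the $T_k$'s, and the iterative procedure then delivers the desired joint isometric loose dilation of $\mathbf{T}$.
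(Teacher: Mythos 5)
Your proposal captures the right idea and is essentially the approach the paper has in mind (the paper only states that the proof is ``similar to the proof of Theorem \eqref{DIAL}''). The crucial structural point that makes both your one-shot and inductive versions work, and that resolves the obstacle you flag, is already built into the proof of Theorem \eqref{DIAL}: the isometric isomorphisms it produces on $L^p(\Omega',X)$ are, by construction, of tensor form $v_j\otimes I_X$, starting from $U=I_X\oplus(\mathsf{u}\otimes I_X)$ and propagated through the induction. Consequently each $V_j=v_j\otimes I_{L^p(\Omega)}$ commutes with $I_{L^p(\Omega')}\otimes S$ for every bounded $S$ on $L^p(\Omega)$, and lifts canonically to $\widehat{V}_j:=v_j\otimes I_{L^p(\Omega_1)}$ on $L^p(\Omega'\times\Omega_1)$, which commutes with $\widetilde{U}_1=I_{L^p(\Omega')}\otimes U_1$ and satisfies $(I\otimes\mathcal{J}_1)V_j=\widehat{V}_j(I\otimes\mathcal{J}_1)$ and $V_j(I\otimes\mathcal{Q}_1)=(I\otimes\mathcal{Q}_1)\widehat{V}_j$, which is exactly what your chaining needs. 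The other ingredient you invoke, namely that $\mathcal{J}_0$ and $\mathcal{Q}_0$ intertwine $T_1$ with $\widetilde{T}_1=I\otimes T_1$, is the content of identity \eqref{ID} (and its dual version for $\mathcal{Q}_0$, obtained by applying it to $T_1^*$ and taking adjoints), applied at each inductive stage; it holds precisely because $T_1$ commutes with every $T_j$, $j\ge 2$. With these two observations made explicit, the one-shot chaining you wrote is complete. Your inductive variant is fine as well, but note that it encounters the same lifting issue one operator at a time, so it is not really a bypass so much as a bookkeeping reformulation; also, the base case is just the hypothesized loose dilation of $T_1$ on its own, and the functional-calculus dilation that supplies the intertwining must always be the one attached to the newly added Ritt operator $T_k$ ($k\ge 2$), never to $T_1$.
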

The concept of loose dilation leads to the notion of $p$-polynomially boundedness and $p$-completely polynomially boundedness (see \cite{ALM}). We need the following definitions.

For $1\leq j\leq n$, we define the $j$-th left shift operator on $\ell_p(\mathbb{Z}^n)$ as 
\[
              (S_j(a))_{i_1,\dots,i_j,\dots,i_n}:=a_{i_1,\dots,i_j-1,\dots,i_n},\ a\in\ell_p(\mathbb{Z}^n).
              \]

\begin{definition}[Jointly $p$-polynomially bounded] Let $1<p<\infty.$ Let $\mathbf{T}=(T_1,\dots,T_n)$ be a commuting tuple of bounded operators on $X$. We say that the $n$-tuple $\mathbf{T}$ is jointly 
$p$-polynomially bounded, if there exists a $K>0$ (independent of $P$), such that
\begin{equation*}
\|P(\mathbf{T})\|_{X\to X}\leq K\|P(\mathcal{S})\|_{\ell_p(\mathbb{Z}^n)\to\ell_p(\mathbb{Z}^n)},
\end{equation*}
for any polynomial $P\in\mathbb{C}[Z_1,\dots,Z_n],$ where  $\mathcal{S}:=(S_1,\c\ldots,S_n)$ is the commuting tuple of left shift operators on $\ell_p(\mathbb{Z}^n).$
\end{definition}
\begin{definition}[Jointly $p$-completely polynomially bounded]Let $1<p<\infty$ and  $\textbf{T}=(T_1,\ldots,T_n)$ be a commuting tuple of bounded operators on $X.$ We say that $\textbf{T}$ is jointly $p$-completely polynomially bounded, if for all $N\in\mathbb{N},$ there exists a constant $C>0,$ such that \[\big\|(P_{i,j}(\textbf{T}))\big\|_{L^p([N],X)\to L^p([N],X)}\leq C\big\|(P_{i,j}(\mathcal{S}))\big\|_{L^p([N],\ell^p(\mathbb{Z}^n))\to L^p([N],\ell^p(\mathbb{Z}^n))},\]
where $P_{i,j}\in\mathbb{C}[Z_1,\ldots,Z_n],$ $1\leq i,j\leq N $  and $[N]=:\{1,\dots,N\}$ with counting measure.
\end{definition}
Note that, in the case of single variable and operators on Hilbert space, the above notions of $2$-polynomially boundedness and $2$-completely polynomially boundedness agree with the usual notions of polynomially boundedness and completely polynomially boundedness respectively. We refer \cite{PI1} for a detailed exposition regarding these concepts.

We require the important notion of $R$-boundedness. Let us consider the probability space $\Omega_0=\{\pm 1\}^{\mathbb{Z}}.$ For any integer $k\in\mathbb{Z},$ denote the $k$-th coordinate function as $\epsilon_k(\omega)=\omega_k,$ where $\omega=(\omega_j)_{j\in\mathbb{Z}}\in\Omega_0.$ The sequence of i.i.d. random variables $(\epsilon_k)_{k\geq 0}$ is called the Rademacher system on the
probability 
space $\Omega_0.$ For  $1\leq p<\infty$ we denote the Banach space $\text{Rad}_p(X)\subseteq L^p(\Omega_0,X)$ to be the closure of the set $\text{span}\{\epsilon_k\otimes\omega_k:k\in\mathbb{Z},x_k\in X\}$ 
in the Bochner space $L^p(\Omega_0,X).$  For $p=2$ we simply denote  $\text{Rad}(X)$.

Let $E\subseteq B(X)$ be a set of bounded operators in $X$. We say that $E$ is $R$-bounded provided, there exists a constant $C>0,$ such that for any finite sequence
$(T_k)_{k=0}^N$ of $E$ and a finite sequence $(x_k)_{k=0}^N$ of $X,$
\begin{equation}\label{R}
\Big\|\sum_{k=0}^N\epsilon_k\otimes T_k(x_k)\Big\|_{\text{Rad(X)}}\leq C\Big\|\sum_{k=0}^N\epsilon_k\otimes x_k\Big\|_{\text{Rad(X)}}.
\end{equation}
We also need the notions of \textbf{\textit{$R$-Ritt}} and \textbf{\textit{$R$-sectorial}} operators, which one obtains by replacing \textbf{\textit{bounded}} by \textbf{\textit{$R$-bounded}} in Definition \eqref{SEC} and Definition \eqref{RIT} respectively. We suggest  \cite{BO}, \cite{WE2} and references therein for more about $R$-boundedness and its applications.

Following result provides a characterisation of joint bounded functional calculus for commuting tuple of Ritt operators on $L^p$-spaces, for $1<p<\infty,$ where each of them admits a bounded functional calculus.

\begin{theorem}\label{CLASS}
Let $1<p\neq 2<\infty$ and $\mathbf{T}=(T_1,\dots,T_n)$ be a commuting tuple of Ritt operators on $L^p(\Omega).$ Then the following assertions are equivalent.
\begin{itemize}
\item[1.]There exist $\gamma_i\in(0,\frac{\pi}{2}),$ $1\leq i\leq n,$ such that for all $1\leq k\leq n,$ and $1\leq i_1<\dots<i_k\leq n,$ the tuples $(T_{i_1},\dots,T_{i_k})$ admit a joint bounded $H^\infty(
{\mathcal{B}}_{
{\gamma_{i_1}}}\times\cdots\times{\mathcal{B}}_{{\gamma_{i_k}}})$ functional calculus
\item[2.]Each $T_i,$ $1\leq i\leq n$ is $R$-Ritt and $\mathbf{T}$ admits a joint isometric loose dilation.
\item[3.]Each $T_i,$ $1\leq i\leq n$ is $R$-Ritt and $\mathbf{T}$ is jointly $p$-completely polynomially bounded.
\item[4.]Each $T_i,$ $1\leq i\leq n$ is $R$-Ritt and $\mathbf{T}$ is jointly $p$-polynomially bounded.
\item[5.] Each $T_i,$ $1\leq i\leq n$ is $R$-Ritt and $I-T_i$ admits a bounded $H^\infty(\Sigma_{\theta_i})$ functional calculus for $\theta_i\in(0,\pi)$,  for each $1\leq i\leq n.$
\end{itemize}
\end{theorem}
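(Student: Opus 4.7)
The plan is to prove the cycle $(1) \Rightarrow (2) \Rightarrow (3) \Rightarrow (4) \Rightarrow (1)$ and to obtain $(1) \Leftrightarrow (5)$ separately. The equivalence $(1) \Leftrightarrow (5)$ is essentially immediate from Theorem \ref{trans}: substituting $A_i = I - T_i$ exchanges the two joint $H^\infty$-calculi functorially. The $R$-Ritt hypothesis appearing in (5) is not additional data, since restricting the joint calculus in (1) to polynomials depending on a single variable already yields a bounded single-variable $H^\infty(\mathcal{B}_{\gamma_i})$-calculus for each $T_i$, and on $L^p(\Omega)$ this upgrades to the $R$-Ritt property by the single-variable result of \cite{LeM2}.

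For $(1) \Rightarrow (2)$: $L^p(\Omega)$ is reflexive, an $SQ_p$-space, and both it and its dual have finite cotype; the restriction of (1) to one variable gives each $T_i$ a bounded single-variable $H^\infty$-calculus. Theorem \ref{DIAL}, case (3), then provides a commuting tuple of positive isometric isomorphisms on an $L^p$-space whose compression yields the joint loose dilation \eqref{DIA}. For $(2) \Rightarrow (3)$: from the dilation one writes $(P_{i,j}(\mathbf{T}))_{i,j=1}^N = (\mathcal{Q}\otimes I_N)(P_{i,j}(\mathbf{U}))_{i,j=1}^N(\mathcal{J}\otimes I_N)$, and since $\mathbf{U}$ is a commuting tuple of onto isometries on an $L^p$-space it defines a representation of $\mathbb{Z}^n$ by isometric isomorphisms. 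The operator-valued Coifman--Weiss transference principle then bounds $\|(P_{i,j}(\mathbf{U}))\|$ by $\|(P_{i,j}(\mathcal{S}))\|$, giving (3) up to the constant $\|\mathcal{Q}\|\,\|\mathcal{J}\|$. The implication $(3) \Rightarrow (4)$ is trivial by taking $N=1$.

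The main obstacle is $(4) \Rightarrow (1)$. By Lemma \ref{polyno} it suffices to establish $\|P(\mathbf{T})\| \leq C\,\|P\|_{\infty,\prod_{i=1}^n\mathcal{B}_{\gamma_i}}$ for polynomials $P$, and the bound from (4) compares $\|P(\mathbf{T})\|$ only with $\|P(\mathcal{S})\|$, which can be much larger than the Stolz-domain sup norm (the spectrum of $\mathcal{S}$ is $\mathbb{T}^n$, which meets $\prod \mathcal{B}_{\gamma_i}$ only at one point); so the $R$-Ritt assumption must genuinely be used. I would follow the $L^p$-strategy of \cite{AFL}, \cite{ALM} in the single-variable case: $R$-Rittness of each $T_i$ yields discrete square function estimates of the form
\[
\Big\|\sum_{k_1,\ldots,k_n\geq 1}\epsilon_{k_1}\otimes\cdots\otimes\epsilon_{k_n}\prod_{i=1}^n k_i(T_i^{k_i}-T_i^{k_i-1})x\Big\|_{\text{Rad}_p(L^p(\Omega))}\lesssim\|x\|,
\]
which, together with the one-variable version of $(4) \Rightarrow (1)$ (providing bounded $H^\infty(\mathcal{B}_{\gamma_i})$-calculus for each single $T_i$), allows a Kalton--Weis-type product argument, valid on $L^p$ because of property $(\alpha)$, to combine the individual calculi into a joint $H^\infty$-calculus on $\prod_{i=1}^n\mathcal{B}_{\gamma_i}$. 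Equivalently, passing through Theorem \ref{trans}, one works on the sectorial side: $R$-Ritt of each $T_i$ yields $R$-sectoriality of $A_i=I-T_i$, and the task becomes upgrading commuting bounded single-variable $H^\infty(\Sigma_{\theta_i})$-calculi to a joint calculus on $\prod_{i=1}^n\Sigma_{\theta_i}$ via a multivariable McIntosh-type Paley--Littlewood identification. This last step — converting pointwise $R$-boundedness and single-variable calculi into a joint $H^\infty$-calculus with a Stolz- (or sector-) norm estimate — is where the proof is genuinely nontrivial and where the specific geometry of $L^p$ is essential.
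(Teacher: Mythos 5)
Your proposal follows the same cycle $(1)\Rightarrow(2)\Rightarrow(3)\Rightarrow(4)\Rightarrow(1)$ as the paper and uses the same ingredients at each step: Theorem~\ref{DIAL} for $(1)\Rightarrow(2)$, a matrix-valued Coifman--Weiss transference together with the identification of $\|P(\mathcal{S})\|$ with a Fourier multiplier norm for $(2)\Rightarrow(3)$, triviality of $(3)\Rightarrow(4)$, and for $(4)\Rightarrow(1)$ the route through the sectorial side: the single-variable result of \cite{ALM} (``$R$-Ritt $+$ $p$-polynomially bounded $\Rightarrow$ bounded $H^\infty$'') gives each $T_i$ a bounded calculus, the Franks--McIntosh $\mathfrak{n}$-f.c.p.\ of $L^p$ upgrades the tuple $(I-T_1,\dots,I-T_n)$ to a joint sectorial calculus, and Theorem~\ref{trans} transfers back. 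The one cosmetic inaccuracy is the phrase that ``$R$-Rittness of each $T_i$ yields'' the displayed joint discrete square function estimate; it is the bounded single-variable $H^\infty$-calculus (obtained from $R$-Ritt together with $p$-polynomial boundedness) that gives square function estimates, and property $(\alpha)$ then tensorises them — but your ``equivalently'' paragraph via Theorem~\ref{trans} gets this exactly right, so the argument as a whole is sound and matches the paper.
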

For single Ritt operators we refer \cite{ALM}.
%
%

\subsection{Similarity problem:}
If an operator $T$ on a Hilbert space is similar to a contraction then it is polynomially bounded. The converse was a long--standing open problem for many years (see \cite{HA}). Finally, Pisier \cite{PI2} in 1997 constructed an example to show that the converse  does not hold. Essentially, he produced an operator which is polynomially bounded but not completely polynomially bounded (In between Paulsen \cite{PA2} had shown that $T$ is similar to contraction if and only if it is completely polynomially bounded). For multivariable case, it is natural to ask if for any commuting tuple of bounded operators $\textbf{T}=(T_1,\ldots,T_n),$ $T_i\in B(\mathcal{H}),$ $1\leq i\leq n,$ such that each $T_i$ is similar to a contraction will automatically imply that $T_i$'s are jointly similar to a commuting tuple of contractions, i.e., if there exists an invertible map $S$ such that $(ST_1S^{-1},\dots,ST_nS^{-1})$ is a commuting tuple of contractions ? Again, Pisier \cite{PI3} (also see \cite{PE}) answered it negatively by showing existence of two commuting contractions $(T_1,T_2)$  on a Hilbert space, $T_i$ being similar to a contraction, $1\leq i\leq 2,$ but $T_1T_2$ is not even polynomially bounded. In this context, one can also ask for examples of class of operators, for which the joint similarity problem has a positive solution. It is known that the Halmos's similarity problem has a positive answer in the class of Ritt operators \cite{DE}, \cite{LeM2}, \cite{LeM3}. Following result asserts an affirmative answer for joint similarity problem in the class of Ritt operators.

\begin{theorem}\label{JRS}
Let $\mathbf{T}=(T_1,\dots,T_n)$ be a commuting tuple of Ritt operators on a Hilbert space $\mathcal{H}.$ Then, the following assertions are equivalent.
\begin{itemize}
\item[1.] Each $T_i$ is similar to a contraction, $1\leq i\leq n.$
\item[2.] For all $1\leq k\leq n,$ and $1\leq i_1<\dots<i_k\leq n,$ the tuples $(T_{i_1},\dots,T_{i_k})$ admit a joint bounded $H^\infty$-functional calculus.
\item[3.]The tuple $\mathbf{T}=(T_1,\dots,T_n)$ is jointly similar to a commuting $n$-tuple of contractions.
\end{itemize}
\end{theorem}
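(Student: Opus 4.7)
The plan is to close the cycle $(3) \Rightarrow (2) \Rightarrow (1) \Rightarrow (3)$. For $(3) \Rightarrow (2)$, I proceed via the transfer principle. Write $\mathbf{T} = S\mathbf{C}S^{-1}$ with $\mathbf{C} = (C_1,\ldots,C_n)$ a commuting tuple of contractions on $\mathcal{H}$. The Ritt property is invariant under similarity, so each $C_i$ is a Ritt contraction; consequently $I - C_i$ is $m$-accretive, hence sectorial of type $\pi/2$ and admits a bounded $H^\infty(\Sigma_{\theta_i})$-functional calculus for every $\theta_i > \pi/2$ by McIntosh's theorem. The Albrecht-Franks-McIntosh theorem for commuting sectorial operators on a Hilbert space (each with individually bounded $H^\infty$-calculi) furnishes a joint bounded $H^\infty(\prod_{i=1}^n \Sigma_{\theta_i})$-functional calculus for $(I - C_1, \ldots, I - C_n)$. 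Applying Theorem~\ref{trans} transfers this to a joint bounded $H^\infty(\prod_{i=1}^n \mathcal{B}_{\gamma_i})$-functional calculus for $\mathbf{C}$, which similarity carries over to $\mathbf{T}$.

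For $(2) \Rightarrow (1)$, I restrict attention to polynomials $P$ that depend on a single variable $z_i$ alone. Then $P(\mathbf{T}) = P(T_i)$ and $\|P\|_{\infty, \prod_{j=1}^n \mathcal{B}_{\gamma_j}} = \|P\|_{\infty, \mathcal{B}_{\gamma_i}}$, so each $T_i$ inherits a polynomial bound and hence (by the single-variable analogue of Lemma~\ref{polyno}) a bounded $H^\infty(\mathcal{B}_{\gamma_i})$-functional calculus. Le~Merdy's single-variable similarity criterion for Ritt operators on a Hilbert space then forces each $T_i$ to be similar to a contraction.

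The heart of the argument is $(1) \Rightarrow (3)$. Each $T_i$ is Ritt and similar to a contraction, hence admits a bounded $H^\infty$-functional calculus (Le~Merdy); on a Hilbert space norm-bounded sets are $R$-bounded, so each $T_i$ is automatically $R$-Ritt and the hypotheses of Theorem~\ref{DIAL} are met. Applied at $X = \mathcal{H}$ and $p = 2$, the dilation space $L^2(\Omega, \mathcal{H})$ is itself Hilbert and the commuting onto isometries produced by the theorem are commuting \emph{unitaries} $\mathbf{U} = (U_1, \ldots, U_n)$ on a Hilbert space $\mathcal{K}$, with bounded maps $\mathcal{Q}: \mathcal{K} \to \mathcal{H}$ and $\mathcal{J}: \mathcal{H} \to \mathcal{K}$ satisfying $P(\mathbf{T}) = \mathcal{Q} P(\mathbf{U}) \mathcal{J}$ for every polynomial $P$. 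For any matrix polynomial $(P_{ij}) \in M_N(\mathbb{C}[z_1, \ldots, z_n])$ the block factorization
\[
(P_{ij}(\mathbf{T}))_{i,j=1}^N = (I_N \otimes \mathcal{Q})\,(P_{ij}(\mathbf{U}))_{i,j=1}^N\,(I_N \otimes \mathcal{J})
\]
combined with the joint spectral theorem for commuting unitaries yields
\[
\|(P_{ij}(\mathbf{T}))\|_{M_N(B(\mathcal{H}))} \leq \|\mathcal{Q}\|\,\|\mathcal{J}\|\,\sup_{\lambda \in \mathbb{T}^n} \|(P_{ij}(\lambda))\|_{M_N(\mathbb{C})},
\]
so $\mathbf{T}$ is jointly completely polynomially bounded with respect to the polydisc algebra. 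The multivariable generalization of the Paulsen-Pisier similarity theorem then produces an invertible $S$ on $\mathcal{H}$ and a commuting tuple of contractions $\mathbf{C}$ with $\mathbf{T} = S\mathbf{C}S^{-1}$.

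The main obstacle is this last step: one must extract joint \emph{complete} polynomial boundedness (not merely polynomial boundedness, which a bare von~Neumann-type estimate would yield) from Theorem~\ref{DIAL} in order to invoke the multivariable Paulsen-Pisier machinery; the matrix-level factorization together with the spectral theorem for commuting unitaries on Hilbert space is precisely what delivers this. A secondary ingredient, used in $(3) \Rightarrow (2)$, is the Albrecht-Franks-McIntosh joint $H^\infty$-calculus for commuting sectorial operators on a Hilbert space.
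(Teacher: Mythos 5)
Your cycle $(3)\Rightarrow(2)\Rightarrow(1)\Rightarrow(3)$ is logically valid and calls on the same circle of ideas as the paper (property $A^{\mathfrak n}$ for Hilbert space via Theorem~\ref{ANN}, the transfer principle Theorem~\ref{trans}, the dilation Theorem~\ref{DIAL}, and Paulsen's similarity theorem), but the orientation is less economical: since $(3)\Rightarrow(1)$ is immediate (each $T_i=S^{-1}(ST_iS^{-1})S$ is a similarity conjugate of a contraction), your step $(3)\Rightarrow(2)$ is exactly as hard as the paper's $(1)\Rightarrow(2)$, and your $(2)\Rightarrow(1)$ — while harmless — is then superfluous. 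The paper therefore runs the cheaper loop $(1)\Rightarrow(2)\Rightarrow(3)\Rightarrow(1)$, closing with the trivial arrow.

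There is one genuine gap in your $(3)\Rightarrow(2)$. You obtain the bounded $H^\infty(\Sigma_{\theta_i})$-calculus for $I-C_i$ via $m$-accretivity of $I-C_i$, which yields only $\theta_i>\pi/2$. But the transfer principle Theorem~\ref{trans}, condition 2, explicitly demands a joint $H^\infty(\prod_i\Sigma_{\theta_i})$-calculus with each $\theta_i\in(0,\pi/2)$; an angle above $\pi/2$ never maps into a Stolz domain under $z\mapsto 1-z$, so the argument as written does not connect to $\mathcal B_{\gamma_i}$. This is fixable: the paper instead quotes Le~Merdy's Theorem~8.1, which says directly that a Ritt operator on a Hilbert space that is similar to a contraction admits a bounded $H^\infty(\mathcal B_{\gamma_i})$-calculus for some $\gamma_i<\pi/2$; the single-variable transfer then lands the calculus for $I-C_i$ at an angle strictly below $\pi/2$, and Theorem~\ref{ANN} preserves the sub-$\pi/2$ angle since $\beta_i$ can be chosen in $(\theta_i,\pi)$. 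Alternatively you would have to invoke an angle-improvement result for sectorial operators on Hilbert space (McIntosh) together with the fact that each $I-C_i$ is sectorial of type strictly less than $\pi/2$ because $C_i$ is Ritt; either patch works, but one of them must appear.

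Your step $(1)\Rightarrow(3)$ is sound and in fact more explicit than the paper's. You apply Theorem~\ref{DIAL} directly from the individual $H^\infty$-calculi (the $R$-Ritt remark is unnecessary — \ref{DIAL} does not hypothesize it — but harmless), observe that the dilating onto isometries on $L^2(\Omega,\mathcal H)$ are unitaries, and then derive joint complete polynomial boundedness from the block factorization together with the spectral theorem for commuting unitaries, finally invoking the Paulsen similarity theorem. The paper compresses this by citing Paulsen's Corollary~5.2 and Theorem~9.1 directly; the content is the same, and spelling out the matrix-level estimate as you do is a genuine improvement in readability.
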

The above theorem can be interpreted as a Hilbert space variant of Theorem \eqref{CLASS}. Also, it is natural to ask for examples of couple of commuting operators $(T_1,T_2)$ on general Banach spaces, such that each of which is $p$-polynomially bounded (resp. completely $p$-polynomially bounded) but not jointly $p$-polynomially bounded (resp. jointly $p$-completely polynomially bounded). We do not have such example. However, in view of Theorem \eqref{CLASS} we have a positive answer for Ritt operators on $L^p$-spaces which are positive contractions, $1<p\neq 2<\infty.$

\begin{center}\section{\bf Proof of the results:}\end{center}
 
\begin{center}{\bf Proof of Theorem \eqref{trans}:}
\end{center}

Let us denote $\Delta_\gamma:=1-\mathcal{B}_{\gamma}$ for $\gamma\in(0,\frac{\pi}{2}).$ Fix $\gamma_i\in(0,\frac{\pi}{2})$ and $\beta_i\in(0,\gamma_i),$ for $1\leq i\leq n.$ Let $\alpha_i<\theta<\beta_i.$  Let $\Gamma_1^i$ be the contour connecting $\cos\beta_i\exp(i\beta_i)$ to $0$ and $0$ to $\cos\beta_i\exp(-i\beta_i)$ via straight lines, and $\Gamma_2^i$ be the contour going from $\cos\beta\exp(-i\beta_i)$ to $\cos\beta\exp(i\beta_i)$ along the circle with centre at $1$ and radius $\sin\beta$ in the counterclockwise direction (see FIGURE \ref{fig:JO}). Define the contour $\Gamma^i:=\Gamma_1^i+\Gamma_2^i.$
Let $f\in H^\infty(\prod_{i=1}^n\Delta_{\gamma_i})$ satisfying $|f(z_1,\ldots,z_n)|\leq c_1\prod_{i=1}^n|z_i|^s$ for some $s>0$. 
\begin{figure}\label{FIG3}
\begin{center}
\begin{tikzpicture}
\draw [very thick,black,->] (0,-5)--(0,5);
\draw [very thick,black,->] (-1,0)--(10,0);
\draw [black,thick] (1,2)--(2,4);
\node at (2.1,4.3){$\gamma_i$};
\draw [black,thick] (1,-2)--(2,-4);
\draw [ultra thick,black,dashed] (2.3,0) circle(2.3cm);
\draw [black,thick] (0,0)--(3,4);
\node at (3.2,4.3) {$\beta_i$};
\draw [black!100,very thick] (0,0)--(1,4/3);
\draw [black,thick] (0,0)--(3,-4);
\draw [black!100,very thick] (0,0)--(1,-4/3);
\def\centerarc[#1](#2)(#3:#4:#5)
    { \draw[#1] ($(#2)+({#5*cos(#3)},{#5*sin(#3)})$) arc (#3:#4:#5); };
 
 \centerarc[black,very thick](2.3,0)(132:-132:1.85);
 \draw [black,thick] (0,0)--(4,4);
 \node at (4.1,4.3) {$\theta$};
 \draw [black,thick] (0,0)--(6,4);
 \node at (6.1,4.3) {$\alpha_i$};
 \draw[black,fill=gray!10] plot [smooth] coordinates{ (0,0) (.5,.2) (1,.5) (1.5,.55) (2,.8) (2.5,.5) (3,.5) (3.2,0) (3,-.5)(2.5,-.5) (2,-.8) (1.5,-.55) (1,-.5) (.5,-.2) (0,0)};
 \draw [very thick,black,->] (-1,0)--(10,0);
 \node[circle,draw=black, fill=black, inner sep=0pt,minimum size=5.2pt] (b) at (1,4/3) {};
 \node[circle,draw=black, fill=black, inner sep=0pt,minimum size=5.2pt] (b) at (1,-4/3) {};
 \draw [thick,black,->] (1,4/3)--(.5,2/3);
 \draw [thick,black,->] (0,0)--(.5,-2/3);
 \node at (.9,-.8) {$\Gamma_1^i$};
 \node at (2.8,-1.5) {$\Gamma_2^i$};
 \node at (1.9,.3) {$\sigma (A)$}; 
 \centerarc[black,very thick,->](2.3,0)(-132:-90:1.85);
 \draw [black,,->] plot[smooth] coordinates{(-.5,.9)(.1,1.2)(.9,4/3)};
 \node at (-1.25,.7) {$cos(\beta)e^{i\beta}$};
 \node at (-.25,-.2) {$0$};
 \draw [black,,->] plot[smooth] coordinates{(5,-2.5)(4.3,-2)(3.5,-1.75)};
\node at (5.19,-2.6) {$\Delta_{\gamma}$};
\end{tikzpicture}
\end{center}
\caption{} 
\label{fig:JO}
\end{figure}
For $j_k\in\{1,2\},$ $1\leq k\leq n,$ let us define (on an appropriate domain)
\begin{eqnarray*}\label{decompo}
f_{j_1,\ldots,j_n}(z_1,\ldots,z_n):=\Big(\frac{1}{2\pi i}\Big)^n\int_{\Gamma_{j_1}^1}\cdots\int_{\Gamma_{j_n}^n}\frac{f(\lambda_1,\ldots,\lambda_n)}
{\prod_{i=1}^n(\lambda_i-z_i)
}d\lambda_1\cdots d\lambda_n.
\end{eqnarray*} 
Observe that, $\ \text{for}\ (z_1,\dots,z_n)\in\prod_{i=1}^n\mathcal{B}_{\beta_i}$ by Cauchy's theorem, 
\begin{equation}\label{COCHY}
f(z_1,\dots,z_n)=\sum_{1\leq j_1,\dots,j_n\leq 2}f_{j_1,\dots,j_n}(z_1,\dots,z_n).
\end{equation}
Let us fix a tuple $(j_1,\ldots,j_n),$ where not all $j_k$'s take the same value. Define the following affine maps on $\mathbb{C}^n$ as
\begin{eqnarray*}\label{Se}
\mathcal{P}_i\mathfrak{F}(j_1,\ldots,j_n)(z_1,\ldots,z_n):=
\begin{cases} z_i,& j_i=1\\
        0,&j_i\neq 1,
         \end{cases}
\end{eqnarray*}
where $\mathcal{P}_i:\mathbb{C}^n\to\mathbb{C}^n$ is the $i$-th projection map, for $1\leq i\leq n.$  Define  
\begin{eqnarray*}
F_{j_1,\ldots,j_n}(z_1,\ldots,z_n)&:=&\frac{(f_{j_1,\ldots,j_n}(\mathfrak{F}(j_1,\ldots,j_n))(z_1,\ldots,z_n)}{\prod_{\{i,j_i=2\}}(1+z_i)},\\
F_{1,\ldots,1}(z_1,\cdots,z_n)&:=&f_{1,\ldots,1}(z_1,\cdots,z_n),\\
F_{2,\ldots,2}(z_1,\cdots,z_n)&:=&\frac{f_{2,\ldots,2}(0,\ldots,0)}{\prod_{i=1}^n(1+z_i)}.
\end{eqnarray*}

First, we will prove two lemmas, which are essential for the proof of Theorem \eqref{trans}. We need the following single variable version of Lemma \eqref{BESTLEMMA} which was part of  Proposition (4.1)  in \cite{LeM2}.
\begin{lemma}[\cite{LeM2}] \label{SINGLEVARIABLE}
Let $0<\theta<\beta_1<\gamma_1<\frac{\pi}{2}.$ Let $f\in H^\infty(\Delta_{\gamma_1})$ satisfying $|f(z_1)|\leq c_1|z_1|^s$ for some $c,\ s>0$. Let us define \[g(z_1)=F_1(z_1)+\frac{1}{1+z_1}F_2(0),\] where \[F_j(z_1)=\frac{1}{2\pi i}\int_{\Gamma_j^1}\frac{f(\lambda_1)}{\lambda_1-z_1}d\lambda_1, \ \text{for}\ j=1,2.\] Then $g\in H_0^\infty(\Sigma_\theta)$ and $\|g\|_{\infty,\Sigma_\theta}\leq C\|f\|_{\infty,\Delta_{\gamma_1}}.$
\end{lemma}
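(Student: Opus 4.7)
The plan is to verify three properties of $g$ on $\Sigma_\theta$: holomorphy, the uniform norm bound $\|g\|_{\infty,\Sigma_\theta}\leq C\|f\|_{\infty,\Delta_{\gamma_1}}$ with constant depending only on $\theta,\beta_1,\gamma_1$, and the decay required to lie in $H_0^\infty(\Sigma_\theta)$. Holomorphy is immediate: since $\theta<\beta_1$, every $z_1\in\Sigma_\theta$ has $|\arg z_1|<\beta_1$ and $z_1\neq 0$, so $z_1$ avoids the two rays comprising $\Gamma_1^1$, and differentiation under the integral sign (justified by the hypothesis $|f(\lambda_1)|\leq c_1|\lambda_1|^s$ near $0$) makes $F_1$ holomorphic on $\Sigma_\theta$; the term $F_2(0)/(1+z_1)$ is holomorphic there because $\operatorname{Re}(z_1)>0$ on $\Sigma_\theta$.

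For the uniform bound I would treat the two summands of $g$ separately. The correction term is harmless: on $\Gamma_2^1$ one has $|\lambda_1|\geq\cos\beta_1$, so $|F_2(0)|\leq C_0\|f\|_{\infty,\Delta_{\gamma_1}}$, and $|1+z_1|\geq 1$ on $\Sigma_\theta$. For $F_1$ I would split $\Sigma_\theta$ into $\{|z_1|\geq\cos\beta_1/2\}$ and $\{|z_1|<\cos\beta_1/2\}$. In the first regime, writing $\lambda_1=re^{\pm i\beta_1}$ and $z_1=\rho e^{i\phi}$ with $|\phi|<\theta$ gives $|\lambda_1-z_1|\geq|z_1|\sin(\beta_1-\theta)$, and the direct estimate produces $|F_1(z_1)|\leq C_1\|f\|_{\infty,\Delta_{\gamma_1}}$. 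In the second regime the angular condition $|\arg z_1|<\theta<\beta_1$ places $z_1$ strictly inside $\Delta_{\beta_1}$, and Cauchy's theorem applied to the closed contour $\Gamma^1=\Gamma_1^1+\Gamma_2^1$ gives
\begin{equation*}
F_1(z_1)+F_2(z_1)=f(z_1),
\end{equation*}
so $F_1(z_1)=f(z_1)-F_2(z_1)$, and both summands are bounded by $\|f\|_{\infty,\Delta_{\gamma_1}}$ times a constant depending only on $\beta_1$, since $|\lambda_1-z_1|\geq\cos\beta_1/2$ for $\lambda_1\in\Gamma_2^1$. This swap from the singular integral over $\Gamma_1^1$ to the safe integral over $\Gamma_2^1$ is the decisive step.

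For the $H_0^\infty$-decay the same two regimes produce the right asymptotics. When $|z_1|\to\infty$, $z_1$ lies outside the compact set $\overline{\Delta_{\beta_1}}$, Cauchy's theorem gives $F_1(z_1)+F_2(z_1)=0$, and so $g(z_1)=-F_2(z_1)+F_2(0)/(1+z_1)=O(|z_1|^{-1})$. When $|z_1|\to 0$ inside $\Sigma_\theta$, I rewrite $g(z_1)=f(z_1)+\big(F_2(0)/(1+z_1)-F_2(z_1)\big)$; the bracket vanishes to first order at $z_1=0$ by Taylor expansion (both $F_2$ and $z_1\mapsto 1/(1+z_1)$ are analytic on $\{|z|<\cos\beta_1\}$), while $|f(z_1)|\leq c_1|z_1|^s$ by hypothesis. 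Choosing $s'=\min(s,1)$ and combining with the global bound yields $|g(z_1)|\leq C'|z_1|^{s'}/(1+|z_1|^{2s'})$, placing $g$ in $H_0^\infty(\Sigma_\theta)$. The main obstacle is conceptual rather than technical: a direct $\|f\|_{\infty,\Delta_{\gamma_1}}$-estimate of $F_1(z_1)$ as $z_1\to 0$ diverges because $\Gamma_1^1$ passes through the origin, and only by replacing $F_1$ by $f-F_2$ inside $\Delta_{\beta_1}$ do we trade this singularity for the harmless integral along $\Gamma_2^1$, which is uniformly bounded away from $0$; this is precisely why the hypothesis imposes $\theta<\beta_1<\gamma_1$.
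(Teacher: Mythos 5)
Your proof is correct and takes essentially the same route as the paper: the decisive step in both is Cauchy's identity $F_1=f-F_2$ on $\Delta_{\beta_1}$, which trades the integral over $\Gamma_1^1$ (which touches $0$) for the safe integral over $\Gamma_2^1$, both for the decay at $0$ and for the uniform bound near $0$. The only differences are cosmetic bookkeeping — you split $\Sigma_\theta$ by the threshold $|z_1|\lessgtr\cos\beta_1/2$ rather than by $\Delta_\theta$ versus $\Sigma_\theta\setminus\Delta_\theta$, and you handle $|z_1|\to\infty$ via $F_1+F_2=0$ rather than by directly bounding $z_1F_1(z_1)$ as the paper does.
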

%
%

Throughout this section we will follow  notations as in the beginning of this section.
\begin{lemma}\label{BESTLEMMA}
Let us define the following auxiliary function
\begin{equation}{\label{eqg}}
g(z_1,\ldots,z_n)=\sum_{1\leq j_1,\ldots,j_n\leq 2}F_{j_1,\ldots,j_n}(z_1,\ldots,z_n).
\end{equation}
Let $\theta\in(0,\beta_i),$ $1\leq i\leq n.$
Then $g\in H_0^\infty(\Sigma_{\theta}^n).$ 
\end{lemma}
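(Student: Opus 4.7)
My strategy is to view $g$ as the iterated application, in each variable, of the one-variable construction from Lemma~\ref{SINGLEVARIABLE}, and then to bootstrap the single-variable estimates. Define the single-variable operator
\[
(\mathcal{T}_k h)(\mathbf{z}) := \frac{1}{2\pi i}\int_{\Gamma_1^k}\frac{h(\ldots,\lambda_k,\ldots)}{\lambda_k - z_k}\,d\lambda_k + \frac{1}{1+z_k}\cdot\frac{1}{2\pi i}\int_{\Gamma_2^k}\frac{h(\ldots,\lambda_k,\ldots)}{\lambda_k}\,d\lambda_k,
\]
which realises the construction of Lemma~\ref{SINGLEVARIABLE} in the $k$-th slot while the remaining variables are frozen as parameters. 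The first step is to verify, by Fubini on the compact contours $\Gamma_{j_k}^k$ and direct unwinding of definitions, that $g = \mathcal{T}_1\mathcal{T}_2\cdots\mathcal{T}_n f$: writing each $\mathcal{T}_k$ as a sum of its two summands and expanding the product produces $2^n$ terms that match the $F_{j_1,\ldots,j_n}$'s term by term. In particular, the factor $(1+z_k)^{-1}$ in each $F_{j_1,\ldots,j_n}$ with $j_k=2$ arises from the second summand of $\mathcal{T}_k$, and the zero entries in $f_{j_1,\ldots,j_n}(\mathfrak{F}(j_1,\ldots,j_n)(\mathbf z))$ come from the fact that this summand evaluates the resolvent $(\lambda_k - \,\cdot\,)^{-1}$ at $z_k = 0$.

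The next step is to iterate Lemma~\ref{SINGLEVARIABLE}. Starting from $f$, which satisfies $|f(\mathbf z)|\leq c_1\prod_i|z_i|^s$ on $\prod_i\Delta_{\gamma_i}$, I freeze $z_1,\ldots,z_{n-1}$ in their respective domains and apply Lemma~\ref{SINGLEVARIABLE} to $z_n\mapsto f(\mathbf z)$ to obtain an extension of $\mathcal{T}_n f$ as a holomorphic function of $z_n\in\Sigma_\theta$ with bound $C\|f\|_\infty$ and the required $H_0^\infty(\Sigma_\theta)$-decay in $z_n$. Because $\Gamma_1^n$ and $\Gamma_2^n$ are compact curves bounded away from $0$ and from $\Sigma_\theta\setminus\Delta_\theta$, the decay $|z_i|^s$ in the remaining variables $i<n$ is preserved uniformly in $z_n$. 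Iterating with $\mathcal{T}_{n-1},\mathcal{T}_{n-2},\ldots,\mathcal{T}_1$, each successive stage preserves the decays already achieved in the later variables, since those variables enter only as parameters outside the next bounded contour integration. Putting everything together yields
\[
|g(\mathbf z)|\leq C\prod_{i=1}^n \frac{|z_i|^{s'}}{1+|z_i|^{2s'}}
\]
for some $s'=\min(s,1)>0$ and some $C>0$, and together with the chain of analytic continuations this gives $g\in H_0^\infty(\Sigma_\theta^n)$.

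The main technical obstacle is ensuring uniformity of the single-variable estimates at each stage of the iteration: specifically, that the constant in the hypothesis $|h(\ldots,z_k,\ldots)|\leq c|z_k|^s$ of Lemma~\ref{SINGLEVARIABLE} is uniformly controlled as more variables are successively treated as parameters, and that the decays telescope correctly through compact contour integrations. Both points reduce to the observation that each $\mathcal{T}_k$ integrates over bounded curves whose distance from the relevant singularities ($z_k\in\Sigma_\theta$ for $\Gamma_1^k$, and $\lambda_k=0$ for $\Gamma_2^k$) is strictly positive, exactly as in the proof of Lemma~\ref{SINGLEVARIABLE}, so no constants degenerate as the iteration proceeds.
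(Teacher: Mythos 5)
Your proof is correct and takes a genuinely different and, in my view, cleaner route than the paper's. You observe the algebraic factorization $g=\mathcal{T}_1\cdots\mathcal{T}_n f$, which I verified directly: expanding the product of the two-term operators $\mathcal{T}_k$ produces $2^n$ terms, and the term indexed by $(j_1,\ldots,j_n)$ is exactly $F_{j_1,\ldots,j_n}$ because the second summand of $\mathcal{T}_k$ supplies both the $(1+z_k)^{-1}$ prefactor and the evaluation of the kernel at $z_k=0$ (i.e.\ the $\lambda_k^{-1}$ factor), matching the map $\mathfrak F(j_1,\ldots,j_n)$. You then bootstrap Lemma~\ref{SINGLEVARIABLE}: since each $\Delta_{\gamma_i}$ is bounded, the hypothesis constant and the sup-norm of the function in the active variable are both comparable to $c_1\prod_{i\neq k}|z_i|^s$, so each application of $\mathcal T_k$ multiplies the bound by a universal factor $h_k(z_k)$ with $H_0^\infty(\Sigma_\theta)$-type decay while the bounds in the already-processed variables pass through unchanged (they appear as parameters outside the compact contour integration). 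This yields a product decay $|g(\mathbf z)|\lesssim \prod_i h_i(z_i)$, which is the required $H_0^\infty(\Sigma_\theta^n)$ membership.

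The paper instead proves the lemma by an explicit case analysis over $2^n$ regions (each $|z_i|$ small or large), regrouping the summands $F_{j_1,\ldots,j_n}$ differently in each region and running an induction on $n$ using a partial-fraction identity when all $|z_i|$ are small. Your iterated single-variable approach avoids this regrouping entirely and handles all regions simultaneously, at the cost of needing to track how the constant in the Lemma~\ref{SINGLEVARIABLE} decay estimate depends on the input bound --- which you correctly flag as the main point and which works out because the $\Delta_{\gamma_i}$ are bounded, so the sup-norm is dominated by the pointwise decay constant. One small caveat worth making explicit in a write-up: the contours $\Gamma_1^k$ pass through $0\in\partial\Delta_{\gamma_k}$, so at intermediate stages you are integrating a function of $\lambda_k$ up to the boundary point $0$; the pointwise decay $|h(\ldots,\lambda_k,\ldots)|\lesssim|\lambda_k|^s$ guarantees the integrand extends continuously there, so the contour integrals converge. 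With that noted, the argument is complete.
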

\begin{proof}
We will give the proof for $n=2$  to avoid notational complexity. For general $n$ the proof is in the same spirit.

It is clear that $g$ is holomorphic on appropriate domain. To establish the lemma, we need to consider the following cases.

{\bf Case 1:} Both $|z_1|$ and $|z_2|$ are large.  We use triangle inequality in \eqref{eqg} to obtain the following estimate on $g.$
\begin{eqnarray*}
|g(z_1,z_2)| &\leq& \frac{1}{4\pi^2|z_1||z_2|}\left| \int_{\Gamma_1^1}\int_{\Gamma_1^2} \frac{f(\lambda_1,\lambda_2)}{(\lambda_1/z_1-1)(\lambda_2/z_2-1)}d\lambda_1 d\lambda_2\right|\\
&& + \frac{1}{4\pi^2|z_1||1+z_2|} \left|\int_{\Gamma_1^1}\int_{\Gamma_2^2} \frac{f(\lambda_1,\lambda_2)}{(\lambda_1/z_1-1)\lambda_2}d\lambda_1 d\lambda_2\right|\\
&&+ \frac{1}{4\pi^2|z_2||1+z_1|}\left|\int_{\Gamma_2^1}\int_{\Gamma_1^2} \frac{f(\lambda_1,\lambda_2)}{\lambda_1(\lambda_2/z_2-1)}d\lambda_1 d\lambda_2\right| \\
&&+\left| \frac{1}{(1+z_1)(1+z_2)} f_4(0,0)\right|.
\end{eqnarray*}
Therefore, in this case $|g(z_1,z_2)|\leq c_2|z_1|^{-1}|z_2|^{-1}$ for some constant $c_2>0$ which is independent of $z_1,$ $z_2$.

{\bf Case 2:} $|z_2|$ is large and  $|z_1|$ small.
We group $F_{1,1}$ and $F_{2,1}$ in \eqref{eqg} to obtain the following expression
\begin{equation}\label{hmmm}
 -\frac{1}{4\pi^2}\Big(\int_{\Gamma_1^2} \frac{1}{\lambda_2-z_2}\Big(\int_{\Gamma_1^1}\frac{f(\lambda_1,\lambda_2)}{(\lambda_1-z_1)}d\lambda_1 +\frac{1}{1+z_1}\int_{\Gamma_2^1} \frac{f(\lambda_1,\lambda_2)}{\lambda_1}d\lambda_1\Big)d\lambda_2\Big).
\end{equation}
For a fixed $\lambda_2$ let us define, $\tilde f^{\lambda_2}(z)=f(z_1,\lambda_2)$. 
Applying Lemma \eqref{SINGLEVARIABLE} to the function $\widetilde f^{\lambda_2},$
the term inside the bracket of \eqref{hmmm} will produce a factor of $|z_1|^s$ for some $s>0$ and as $|z_2|$ is large we will get a factor of $\frac{1}{|z_2|}$. Hence, we have the estimate
$\left|F_{1,1}(z_1,z_2)+F_{2,1}(0,z_2)\right|\leq c_3 |z_1|^s|z_2|^{-1}.$
Similarly, we group $F_{1,2}$ and $F_{2,2}$ in \eqref{eqg} to obtain appropriate bound for $g$.

{\bf Case 3:} $|z_1|$ is large and  $|z_2|$ small. This is similar to the previous case. 

{\bf Case 4:} Both $|z_1|$ and $|z_2|$ are small. We apply Cauchy's theorem to obtain the following identity
\begin{eqnarray*}
g(z_1,z_2)&=& f(z_1,z_2)-\left(f_{1,2}(z_1,z_2)-\frac{1}{1+z_2} f_{1,2}(z_1,0)\right)\\
&& -\Big(f_{2,1}(z_1,z_2)-\frac{1}{1+z_1} f_{2,1}(0,z_2)\Big)\\& &-\Big(f_{2,2}(z_1,z_2)-\frac{1}{(1+z_1)(1+z_2)} f_{2,2}(0,0)\Big)\\
&=& I_1-I_2-I_3-I_4.
\end{eqnarray*}
Let us rewrite and notice that
\begin{eqnarray*}
I_2&=& (f_{1,2}(z_1,z_2)- f_{1,2}(z_1,0))+\frac{z_2}{1+z_2} f_{1,2}(z_1,0)\\
&=& \frac{1}{2\pi i}  \int_{\Gamma_2^2} \frac{z_2f(z_1,\lambda_2)}{\lambda_2(\lambda_2-z_2) } d\lambda_2+\frac{1}{4\pi^2}\int_{\Gamma_2^1}\int_{\Gamma_2^2} \frac{z_2f(\lambda_1,\lambda_2)}{\lambda_2(\lambda_1-z_1)(\lambda_2-z_2)}d\lambda_1 d\lambda_2\\&&+\frac{z_2}{1+z_2} f_{1,2}z_1,0).
\end{eqnarray*}
By an analogous computation, we derive 
\begin{eqnarray*}
I_3&=&  \frac{1}{2\pi i}  \int_{\Gamma_2^1} \frac{z_1f(\lambda_1,z_2)}{\lambda_1(\lambda_1-z_2)} d\lambda_1\\&&+\frac{1}{4\pi^2}\int_{\Gamma_2^1}\int_{\Gamma_2^2} \frac{z_1f(\lambda_1,\lambda_2)}{\lambda_1(\lambda_1-z_1)(\lambda_2-z_2)}d\lambda_1 d\lambda_2+\frac{z_1}{1+z_1} f_{2,1}(z_1,0).
\end{eqnarray*}

Now  we observe the following
\begin{eqnarray*}
&&f_{2,2}(z_1,z_2)-f_{2,2}(0,0)\\
&=&-\frac{1}{4\pi^2}\Big( \int_{\Gamma_2^1}\int_{\Gamma_2^2} \frac{z_2f(\lambda_1,\lambda_2)}{\lambda_2(\lambda_1-z_1)(\lambda_2-z_2)}d\lambda_1 d\lambda_2\\
&&+\int_{\Gamma_2^1}\int_{\Gamma_2^2} \frac{z_1f(\lambda_1,\lambda_2)}{\lambda_1(\lambda_1-z_1)(\lambda_2-z_2)}d\lambda_1 d\lambda_2\\
&&- z_1z_2 \int_{\Gamma_2^1}\int_{\Gamma_2^2} \frac{f(\lambda_1,\lambda_2)}{\lambda_2\lambda_1(\lambda_2-z_2)(\lambda_1-z_1)}d\lambda_1 d\lambda_2 \Big).
\end{eqnarray*}

Hence, by adding all the terms we get
\begin{eqnarray*}
g(z_1,z_2) &=&f(z_1,z_2) -\frac{1}{2\pi i}\left( \int_{\Gamma_2^2} \frac{z_2f(z_1,\lambda_2)}{\lambda_2(\lambda_2-z_2)} d\lambda_2  + \int_{\Gamma_2^1} \frac{z_1f(\lambda_1,z_2)}{\lambda_1(\lambda_1-z_1)} d\lambda_1\right)\\
&&-\frac{z_2}{1+z_2} \Big( f_{1,2}(z_1,0)+\frac{1}{(1+z_1)}f_{2,2}(0,0)\Big) \\
&&- \frac{z_1}{1+z_1} \Big(f_{2,1}(0,z_2) +\frac{1}{(1+z_2)}f_{2,2}(0,0)\Big)\\
 &&-z_1z_2 \Big(\frac{1}{4\pi^2}  \int_{\Gamma_2^1}\int_{\Gamma_2^2} \frac{f(\lambda_1,\lambda_2)}{\lambda_2\lambda_1(\lambda_2-z_2)(\lambda_1-z_1)}d\lambda_1 d\lambda_2\\
 &&\hspace*{1.3in}+ \frac{1}{(1+z_1)(1+z_2)}f_{2,2}(0,0)\Big).
\end{eqnarray*}
Since $|z_1|,|z_2|$ both small, we have for some $s_1,s_2>0,$
\begin{equation}\label{Dusra}
 \left| \int_{\Gamma_2^2} \frac{z_2f(z_1,\lambda_2)}{\lambda_2(\lambda_2-z_2)} d\lambda_2  + \int_{\Gamma_2^1} \frac{z_1f(\lambda_1,z_2)}{\lambda_1(\lambda_1-z_1)} d\lambda_1 \right|\leq K_1|z_1|^{s_1}|z_2|^{s_2},
\end{equation}
where $K_1>0$ is a constant which depends on $f.$
We apply Lemma \eqref{SINGLEVARIABLE} as we did to handle Case 2  to get a constant $K_2>0$ such that
\begin{eqnarray}\label{Tisra}
\Big|f_{1,2}(z_1,0)+\frac{1}{(1+z_1)}f_{2,2}(0,0)\Big|\leq K_2 |z_1|^s.
\end{eqnarray}
 Similarly, for some $K_3>0,$ we will obtain the bound 
\begin{eqnarray}\label{Chotha}
\left|f_{2,1}(0,z_2) +\frac{1}{(1+z_2)}f_{2,2}(0,0)\right|\leq K_3 |z_2|^s.
\end{eqnarray}
Therefore, in view of \eqref{Dusra}, \eqref{Tisra} and \eqref{Chotha}, we obtain that $g$ has the appropriate bound.

\end{proof}
\begin{proof}[\bf Proof of Theorem \eqref{trans}:] We only prove $(2)$ implies $(1).$  We also restrict ourselves for $n=2$, for general $n>2$ the proof can be done using induction. Let us assume that the tuple $\mathbf{A}=(A_1,A_2)$ admits a joint bounded $H^\infty(\prod_{i=1}^2\Sigma_{\theta_i})$ functional calculus  and $A_i$ admits a bounded $H^\infty(\Sigma_{\theta_i})$ functional calculus for
some $\theta_i\in(0,\frac{\pi}{2}),$ $1\leq i\leq 2.$ Suppose $\sigma(A_i)\subseteq\overline{\Delta_{\alpha_i}},$ $1\leq i\leq 2.$ We can assume that all $\theta_i$'s are equal and $\theta>\alpha_i$ for $1\leq i\leq 2.$ Now we choose $\beta_i$ and $\gamma_i$ in $(0,\frac{\pi}{2})$ so that $\theta<\beta_i<\gamma_i$ for $1\leq i\leq 2.$

Let $\phi\in H_0^\infty(\mathcal{B}_{\gamma_1}\times \mathcal{B}_{\gamma_1}).$ Define $f:\Delta_{\gamma_1}\times\Delta_{\gamma_2}\to\mathbb{C}$ to be \[f(z_1,z_2):=\phi(1-z_1,1-z_2).\] Now we define an auxilliary function $g$ as in \eqref{eqg}.
Since $g\in H_0^\infty(\Sigma_\theta\times\Sigma_\theta)$ by Lemma \eqref{BESTLEMMA} we can define $g(A_1,A_2)$ by \eqref{JO}. Let us define

\[f_{1,1}(A_1,A_2):=g(A_1,A_2)- (I+A_2)^{-1} f_{1,2}(A_1,0)-(I+A_1)^{-1}f_{2,1}(0,A_2)\]
\begin{equation}\label{ASI}
\vspace{.5mm}- (I+A_1)^{
-1}(I+A_2)^{-1} f_{2,2}(0,0),
\end{equation} where in \eqref{ASI}
$f_{1,2}(A_1,0)$ and $f_{2,1}(0,A_2)$ are defined as the following. Consider \[\widetilde{g}(z_1)= f_{1,2}(z_1,0)+\frac{1}{1+z_1}f_{2,2}(0,0).\] By Lemma \eqref{SINGLEVARIABLE} $\widetilde{g}\in H_0^\infty (\Sigma_\theta)$. Therefore, $\widetilde{g}(A_1)$ can be defined as in \eqref{JO}. Thus, we can define \[f_{1,2}(A_1,0):=\widetilde{g}(A_1)-(I+A_1)^{-1}f_{2,2}(0,0).\] We define $f_{2,1}(0,A_2)$ in a similar manner. Since $A_1$ admits a bounded functional calculus, we have the estimate $\|\widetilde{g}(A_1)\|\leq K^\prime\|\widetilde{g}\|_{\infty,\Sigma_\theta}.$ Therefore, it follows immediately from Lemma \eqref{SINGLEVARIABLE} that \[\|f_{1,2}(A_1,0)\|\leq K^{\prime\prime}\|\widetilde{g}\|_{\infty,\Sigma_\theta}\leq K_2\|f\|_{\infty, \Delta_{\gamma_1}\times \Delta_{\gamma_2}}.\] Similarly, we get the appropriate bound for $f_{2,1}$,\[\|f_{2,1}(0,A_2)\|\leq K_3\|f\|_{\infty, \Delta_{\gamma_1}\times \Delta_{\gamma_2}}.\] Note that the constants $K_2$ and $K_3$ are independent of $f.$ From \eqref{ASI}, we get
\begin{equation}
\|f_{1,1}(A_1,A_2)\|\leq K_4 \|f\|_{\infty, \Delta_{\gamma_1}\times \Delta_{\gamma_2}},
\end{equation}for some positive constant $K_4$ (independent of $f$). For a fixed $\lambda_2$ define $\tilde f^{\lambda_2}(z_1)=f(z_1,\lambda_2)$. Note that the function $$g^{\lambda_2}(z_1):=\frac{1}{2\pi i}\Big(\int_{\Gamma_1^1}\frac{\widetilde{f}^{\lambda_2}(\lambda_1)}{\lambda_1-z_1}d\lambda_1+\frac{1}{1+z_1}\int_{\Gamma_2^1}\frac{\widetilde{f}^{\lambda_2}(\lambda_1)}{\lambda_1}d\lambda_1\Big)$$ is in $H_0^\infty(\Sigma_\theta).$ Thus, we use Lemma \eqref{SINGLEVARIABLE} to have 
\begin{equation}\label{g^}
\|g^{\lambda_2}(A_1)\|\leq K_5\|f\|_{\infty, \Delta_{\gamma_1}\times\Delta_{\gamma_2}},
\end{equation}where, $K_5>0$ is independent of $f.$ Let us define
 \begin{eqnarray*}
 f_{1,2}(A_1,A_2)&:=&\frac{1}{2\pi i}\int_{\Gamma_2^2}g^{\lambda_2}(A_1)(\lambda_2-A_2)^{-1}d\lambda_2\\
 &&-\Big(\frac{1}{2\pi i}\Big)^2\int_{\Gamma_2^1}\int_{\Gamma_2^2}f(\lambda_1,\lambda_2)\lambda_1^{-1}(\lambda_2-A_2)^{-1}d\lambda_1d\lambda_2.
 \end{eqnarray*}
In above the second integral is defined as Dunford-Riesz functional calculus. Since $\sigma(A_i)\cap \Gamma_2^i=\emptyset,$ $i=1,2,$ one can observe that $\inf\limits_{\lambda_i\in \Gamma_2^i}\|\lambda_iI-A_i\|>0$ for $1\leq i\leq 2.$ Therefore, by \eqref{g^} we have an estimate 
\begin{equation}\label{ASIIII}
\|f_{1,2}(A_1,A_2)\|\leq K_6\|f\|_{\infty, \Delta_{\gamma_1}\times\Delta_{\gamma_2}}.
\end{equation}
By a similar way one can define $f_{2,1}(A_1,A_2)$
and obtain the estimate
\begin{equation}\label{ASIII}
\|f_{2,1}(A_1,A_2)\|\leq K_7\|f\|_{\infty, \Delta_{\gamma_1}\times\Delta_{\gamma_2}}.
\end{equation} 
 It is easy to see
\begin{equation}\label{ASII}
\|f_{2,2}(A_1,A_2)\|\leq K_8\|f\|_{\infty, \Delta_{\gamma_1}\times\Delta_{\gamma_2}}
\end{equation} Note that the constants $K_6,K_7,K_8>0$ are independent of $f$.
Hence by \eqref{ASI}, \eqref{ASIIII}, \eqref{ASIII} and \eqref{ASII} we have
\begin{eqnarray}\label{1771}
\|\phi(T_1,T_2)\| &=& \| f_{1,1}(A_1,A_2) +  f_{1,2}(A_1,A_2) + f_{2,1}(A_1,A_2) + f_{2,2}(A_1,A_2)\| \\\nonumber
&\leq &  K \|\phi\|_{\infty,\mathcal{B}_{\gamma_1}\times\mathcal{B}_{\gamma_2}},
\end{eqnarray}
 where $K>0$ is independent of $f.$
The proof is completed by \eqref{1771} and \cite{LeM2}.

\end{proof}
\begin{center}{\bf Proof of Theorem \eqref{DIAL}:}
\end{center}

Let $T$ be Ritt operator on $X.$ Since $I-T$ is a sectorial operator, one can define via functional calculus the fractional power $(I-T)^\alpha$ for any $\alpha>0.$ One defines the associated Littlewood-Paley square function \[\|x\|_{T,\alpha}:=\Big\|
\sum_{k=0}^\infty(k+1)^{\alpha-\frac{1}{2}}\epsilon_k\otimes T^k(I-T)^\alpha x\Big\|_{\text{Rad}(X)},\] where $x\in X.$ Note that it can happen that $\|x\|_{T,\alpha}=\infty$ for some $x\in X.$

Now we will give the proof of Theorem \eqref{DIAL} following the idea of Prof. Christian Le Merdy in a personal communication.

\begin{proof}[{\bf Proof of Theorem \eqref{DIAL}:}] We proceed by induction on the number of commuting operators. Let us define an operator $\mathsf{u}:L^p(\Omega_0)\to L^p(\Omega_0)$ as $$\mathsf{u}(f)(\{\omega_k\}_k)=f(\{\omega_{k-1}\}_k)$$ for $f\in
L^p(\Omega_0).$ Clearly, $\mathsf{u}$ is a positive isometric isomorphism. Therefore one can extend $\mathsf{u}$ to an operator $\mathsf{U}:=\mathsf{u}\otimes I_X$ on the Banach space $L^p(\Omega_0,X)$ as an isometric 
isomorphism. Since $\mathsf{u}(\epsilon_k)=\epsilon_{k-1}$, for any $\sum_{k}\epsilon_k\otimes x_k\in\text{Rad}_p(X),$ we have  $\mathsf{U}(\sum_k\epsilon_k\otimes x_k)=\sum_k
\epsilon_k\otimes x_{k+1}.$ Let $U$ be the isometric isomorphism  $I_X\oplus\mathsf{U}$ on $X\oplus_p L^p(\Omega_0,X).$ Note that one can identify $X\oplus_p L^p(\Omega_0,X)$ as $L^p(\Omega^\prime,X)$ for 
some measure space $\Omega^\prime.$ 

Since the Banach spaces $X$ and $X^*$ are of finite cotype and $T_1$ admits a bounded $H^\infty$ functional calculus, we have the square function estimates \cite{AFL}
$\|x\|_{T_1,\frac{1}{2}}\leq C_1\|x\|_{X}$ and $\|y\|_{T_1^*,\frac{1}{2}}
\leq C_2\|y\|_{X^*}$ for all $x\in X,\ y\in X^*.$ Again as $T_1$ is power bounded and $X$ is reflexive, one can use mean ergodic theorem \cite{KR} to have the following
decompositions, $$X={\rm Ker}(I_X-T_1)\oplus\overline{{\rm Ran}(I_X-T_1})$$ and $$X^*={\rm Ker}(I_{X^*}-T_1^*)\oplus\overline{{\rm Ran}(I_{X^*}-T_1^*}).$$ Also one can notice that for any $x_0\in {\rm Ker}(I_X-T_1),x_1\in \overline{{\rm Ran}(I_X-T_1})$ and $y_0\in {\rm Ker}(I_{X^*}-T_1^*),y_1\in \overline{{\rm Ran}(I_{X^*}-T_1^*}),$ we have $\langle x_0,y_1\rangle=\langle x_1,y_0\rangle=0.$ Therefore, from the square function estimates, we can define the 
linear map $$J:{\rm Ker}(I_X-T_1)\oplus\overline{{\rm Ran}(I_X-T_1})\to X\oplus_pL^p(\Omega_0,X),$$ as ${J}(x_0\oplus x_1)=x_0\oplus\sum_{k=0}^\infty\epsilon_k\otimes T_1^k(I_X-T_1)^{\frac{1}{2}}x_1.$

 Similarly, define 
$$\tilde{J}:{\rm Ker}(I_{X^*}-T_1^*)\oplus\overline{{\rm Ran}(I_{X^*}-T_1^*})\to X^*\oplus_{p^\prime}L^{p^\prime}(\Omega_0,X^*),$$ as $\tilde{J_1}(y_0\oplus y_1)=y_0\oplus\sum_{k=0}^\infty\epsilon_k\otimes{T_1^*}^k(I_{X^*}-T_1^*)
^{\frac{1}{2}}y_1.$

 Following the proof of \cite{AFL} Theorem (4.1), we have that \[\langle U^nJ(x_0\oplus x_1),\tilde{J}(y_0\oplus y_1)\rangle=\langle x_0,x_1\rangle+\langle(I_X+T_1)^{-1}T_1^nx_1,y_1\rangle.\] Let us define the operator
$\Theta:{\rm Ker}(I_X-T_1)\oplus\overline{{\rm Ran}(I_X-T_1})\to X\oplus_pL^p(\Omega_0,X)$ as \[\Theta(x_0\oplus x_1):=x_0\oplus(I_X+T_1)x_1.\] Thereafter, it is easy to check that $\langle U^nJ\Theta(x_0\oplus x_1),\tilde{J}
(y_0
\oplus y_1)\rangle=\langle x_0,y_0\rangle+\langle T_1^nx_1,y_1\rangle.$ Define $Q_1={\tilde{J}}^*$ and $J_1=J\Theta$ to obtain $T_1^n=Q_1U^nJ_1,n\geq 0.$ 

We notice the following identity,
\begin{equation}\label{ID} J_1S=(S\oplus(I_{L^p(\Omega_0)}\otimes S))J_1,\end{equation} where $S:X\to X$ is a bounded operator which commutes with $T_1$. As, for   $x_0\in {\rm Ker}(I_X-T_1),\;{\rm and}\; x_1\in \overline{{\rm Ran}(I_X-T_1})$ we have,
\begin{eqnarray*}
&&(S\oplus(I_{L^p(\Omega_0)}\otimes S))J\Theta(x_0\oplus x_1)\\
&=& (S\oplus(I_{L^p(\Omega_0)}\otimes S))J(x_0\oplus(I_X+T_1)x_1)\\
&=&(S\oplus(I_{L^p(\Omega_0)}\otimes S))(x_0\oplus\sum_{k=0}^\infty\epsilon_k\otimes T_1^k(I_X-T_1)^k)^{\frac{1}{2}}(I_X+T_1)x_1)\\
&=& Sx_0\oplus\sum_{k=0}^\infty\epsilon_k\otimes T_1^k(I_X-T_1)^k)^{\frac{1}{2}}(I_X+T_1)Sx_1\\
&=&J_1S(x_0\oplus x_1).
\end{eqnarray*}
Let $(T_2,\dots,T_{m})$ be commuting tuple of Ritt operators each of which admits a bounded $H^\infty$-functional calculus. Hence by induction hypothesis, there exists  a measure space $\Omega^{\prime\prime},$ a 
commuting tuple of isometric isomorphisms $(U_2,\dots,U_{m})$ on $L^p(\Omega^{\prime\prime},X)$ together with two bounded operators $Q_2:L^p(\Omega^{\prime\prime},X)\to X$ and $J_2:X\to L^p(\Omega^{\prime\prime},X)$  such that 
$$T_2^{i_2}\cdots T_{m}^{i_{m}}=Q_2U_2^{i_1}\cdots U_{m}^{i_{m}}J_2$$ for all $i_2,\dots,i_{m}\in\mathbb{N}_0.$ We notice the following,
\begin{eqnarray*}
&&T_1^{i_1}T_2^{i_2}\dots T_m^{i_m}\\&=& Q_1U^{i_1}J_1T_2^{i_2}\cdots T_{m}^{i_{m}}\\
&=&Q_1U^{i_1}(T_2^{i_2}\cdots T_{m}^{i_{m}}\oplus(I_{L^p(\Omega_0)}\otimes T_2^{i_2}\cdots T_{m}^{i_{m}}))J_1\\
&=&Q_1U^{i_1}(Q_2\oplus(I_{L^p(\Omega_0)}\otimes Q_2))\Big(\prod_{j=2}^m(U_j^{i_j}\oplus(I_{L^p(\Omega_0)}\otimes U_j^{i_j}))\Big)\\
&&\hspace*{2in}(J_2\oplus(I_{L^p(\Omega_0)}\otimes J_2))J_1\\
&=&\mathcal{Q}(I_{L^p(\Omega^{\prime\prime},X)}\oplus(\mathsf{u}^{i_1}\otimes I_{L^p(\Omega^{\prime\prime},X)}))\Big(\prod_{j=2}^m(U_j^{i_j}\oplus(I_{L^p(\Omega_0)}\otimes U_j^{i_j}))\Big)\mathcal{J},
\end{eqnarray*} where $\mathcal{Q}=Q_1(Q_2\oplus(I_{L^p(\Omega_0)}\otimes Q_2))$ and $\mathcal{J}=(J_2\oplus(I_{L^p(\Omega_0)\otimes J_2}))J_1.$ 

If $X$ is an ordered Banach space, a closed subspace of an $L^p$-space, or an $\text{SQ}_p$ space, one can get the result similarly.

\end{proof}
\begin{center}
 {\bf Proofs of  Theorem \eqref{CLASS} and  Theorem \eqref{JRS} }
\end{center}
We will use method of transference for proving the implication $(ii)\Rightarrow (i)$ in Theorem (\ref{CLASS}). 

\begin{proof}[\bf Proof of Theorem (\ref{CLASS})]
 Form Theorem \eqref{DIAL}  we have  $\mathbf T$ admits joint isometric loose dilation.  As $T_i$ satisfies $H^\infty$-functional calculus for each $i=1,\dots,n$ by \cite{ALM} we have  $T_i$'s are $R$-Ritt. This proves $(1)\Rightarrow (2)$. 
 
Assuming $(2),$ one can deduce $(3)$ by using a matrix-valued version of Coifmann-Weiss general transference principle (Theorem 5.2.1, \cite{FE}).
$(3)\Rightarrow(4)$ is trivial. 
Now for the remaining we have each $T_i$ is $R$-Ritt and $p$-polynomially bounded by \cite{ALM}, $I-T_i$ has bounded $H^\infty(\Sigma_{\theta_i})$-functional calculus for some $\theta_i\in(0,\pi).$ Again by Theorem \eqref{trans}, \cite{ALM} and \cite{FM} we have the required result.
\end{proof}
It is known that Ritt operators which are positive contractions admit  bounded $H^\infty$-functional calculus \cite{AFL}, \cite{LX}. So, by applying Theorem \eqref{CLASS} we have the following weak version of Akcoglu-Sucheston dilation theorem. 
\begin{corollary}
Let $\mathbf{T}=(T_1,\dots,T_n)$ be an $n$-tuple of commuting Ritt operators which are also positive contraction on an $L^p$-space, $1<p<\infty.$ Then $\mathbf T$ admits a joint isometric loose dilation. 
\end{corollary}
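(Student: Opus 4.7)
My plan is to combine the individual functional calculus theory for Ritt positive contractions on $L^p$-spaces with the joint framework developed above. First, I would invoke the results of \cite{AFL} and \cite{LX}: since each $T_i$ is Ritt and simultaneously a positive contraction on an $L^p$-space, each $T_i$ admits an individual bounded $H^\infty(\mathcal{B}_{\gamma_i})$-functional calculus for some $\gamma_i\in(0,\frac{\pi}{2})$. From \cite{ALM} I also record that each such $T_i$ is automatically $R$-Ritt, a property that will be needed when invoking Theorem \ref{CLASS} at the end.

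Next, I would upgrade from individual to joint bounded functional calculus by exploiting the special structure of $L^p$-spaces. Via the single-variable version of the transfer principle (Theorem \ref{trans}), each sectorial operator $A_i:=I-T_i$ admits a bounded $H^\infty(\Sigma_{\theta_i})$-functional calculus. Since $L^p(\Omega)$ satisfies property $(\alpha_n)$, an application of Minkowski's and Khintchine's inequalities, it also satisfies property $A^{\mathfrak{n}}$, and therefore by Theorem \ref{ANN} the space $L^p(\Omega)$ enjoys the $\mathfrak{n}$-f.c.p. Commutativity of the $T_i$'s passes to commutativity of the resolvents of the $A_i$'s, so the $\mathfrak{n}$-f.c.p. applies and produces a joint bounded $H^\infty(\prod_i\Sigma_{\beta_i})$-functional calculus for the tuple $(A_1,\dots,A_n)$.

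Finally, I would apply Theorem \ref{trans} in its multivariable form to transfer this joint sectorial functional calculus back to a joint bounded $H^\infty(\prod_i\mathcal{B}_{\gamma_i})$-functional calculus for $\mathbf{T}$ itself. Since each $T_i$ is $R$-Ritt, the equivalence $(1)\Leftrightarrow(2)$ in Theorem \ref{CLASS} then delivers the desired joint isometric loose dilation.

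The main conceptual obstacle is making sure the hypotheses of the various chained results line up cleanly: in particular, that commutativity of the $T_i$'s carries through to commutativity of resolvents of the $A_i$'s so that the $\mathfrak{n}$-f.c.p. is applicable, and that the sectoriality angles can be chosen small enough for the transfer principle to apply simultaneously to each $A_i$. A secondary concern is the Hilbert space case $p=2$, which formally lies outside Theorem \ref{CLASS}; this can be salvaged by a direct appeal to Theorem \ref{DIAL}, since for $X=L^2(\Omega)$ the dilation space $L^2(\Omega',X)$ is itself an $L^2$-space and the isometric isomorphisms supplied by that theorem are precisely what the definition of joint isometric loose dilation requires.
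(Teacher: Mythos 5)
Your argument is correct, but it is considerably longer than necessary: the joint bounded $H^\infty$-functional calculus is not actually needed to reach the dilation. The paper's intended route is to go from the individual functional calculi straight to Theorem~\ref{DIAL}, whose hypotheses (each $T_i$ Ritt with an individual bounded $H^\infty$-calculus; $X$ reflexive with $X$ and $X^*$ of finite cotype) are all satisfied here, and whose conclusion already produces commuting isometric isomorphisms on $L^p(\Omega',X)\cong L^p(\Omega'\times\Omega)$ together with the requisite $\mathcal{Q}$ and $\mathcal{J}$. The detour you take — transferring to $A_i=I-T_i$, invoking property $A^{\mathfrak{n}}$ and Theorem~\ref{ANN} to build a joint sectorial calculus, transferring back via Theorem~\ref{trans}, and then running the equivalence $(1)\Leftrightarrow(2)$ of Theorem~\ref{CLASS} — is logically sound and in fact re-derives the joint functional calculus as a bonus, but Theorem~\ref{CLASS}'s implication $(1)\Rightarrow(2)$ is itself proved by falling back on Theorem~\ref{DIAL}, so you end up passing through the same engine by a longer road.

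Your remark about $p=2$ is a genuinely useful observation: Theorem~\ref{CLASS} is stated only for $p\neq 2$, while the corollary asserts $1<p<\infty$, so any proof routed exclusively through Theorem~\ref{CLASS} has a formal gap at $p=2$. Your repair (direct appeal to Theorem~\ref{DIAL}, noting that $L^2(\Omega',L^2(\Omega))$ is a Hilbert space and that the definition of joint isometric loose dilation has a Hilbert-space form) is exactly right. Better yet, the same direct appeal to Theorem~\ref{DIAL} handles every $p\in(1,\infty)$ uniformly, which would let you drop the $\mathfrak{n}$-f.c.p. detour and the case split entirely.
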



\begin{proof}[\bf Proof of Theorem 
(\ref{JRS})]
 If each $T_i$ is similar to contraction then by \cite{LeM2} (Theorem 8.1) each $T_i$
admits a bounded $H^{\infty}$-functional calculus. Thus, $(2)$ follows by \cite{FM}. Assuming $(2)$, by Theorem \eqref{DIAL}, $\mathbf{T}$ has a joint isometric loose dilation on $L^2(\Omega,\mathcal{H})$ for some measure space $\Omega$. Therefore, $(2)\implies (3)$ by \cite{PA1} (Corollary 5.2). $(3)\implies(1)$ is trivial.
\end{proof}
\textbf{Acknowledgement:} We are thankful to  Prof. C. Le Merdy for exposing us to the theory of Ritt operators and suggesting the problem as well as  many valuable discussions and suggestions during this work.  We appreciate   Sayantan Chakrabarty's help in drawing the pictures.

\end{document}